\documentclass[10pt,a4paper]{article}  

\usepackage{graphicx,latexsym,euscript,makeidx,color,bm}
\usepackage{amsmath,amsfonts,amssymb,amsthm,thmtools,mathtools,mathrsfs,enumerate}
\usepackage[colorlinks,linkcolor=blue,citecolor=red]{hyperref}

\usepackage{geometry}
\allowdisplaybreaks[4]
  \def\hA{\widehat{A}}   \def\bu{\bar{u}}
   \def\cB{{\cal B}}  
\def\dbC{\mathbb{C}} \def\sC{\mathscr{C}}

 \def\sG{\mathscr{G}}    
\def\dbH{\mathbb{H}}     
     
     \def\bX{\bar{X}}

\def\dbR{\mathbb{R}}   
\def\dbS{\mathbb{S}}   
   
 \def\sU{\mathscr{U}}  
 \def\sV{\mathscr{V}}  
   
 \def\sX{\mathscr{X}}

\def\nid{\,|\,}       \def\lt{\left}          \def\hb{\hbox}
\def\ms{\medskip}     \def\rt{\right}         \def\ae{\text{a.e.}}
          \def\lan{\langle}       
\def\q{\quad}         \def\ran{\rangle}       \def\tr{{\rm tr}}
\def\qq{\qquad}             \def\diag{\hb{diag\,}}
\def\no{\noindent}          
\def\hp{\hphantom}         \def\scp{\scriptscriptstyle}
\def\nn{\nonumber}         \def\scT{\scp T}
\def\rf{\eqref}       \def\Blan{\Big\lan}     
\def\cd{\cdot}        \def\Bran{\Big\ran}     
\def\deq{\triangleq}  \def\({\Big(}           \def\les{\leqslant}
\def\ti{\tilde}       \def\){\Big)}           \def\ges{\geqslant}
\def\wt{\widetilde}   \def\[{\Big[}           
     \def\]{\Big]}           
\def\ss{\smallskip} 
\def\a{\alpha}       \def\l{\lambda}    
\def\b{\beta}        \def\t{\tau}       \def\F{\varPhi}
\def\d{\delta}       \def\th{\theta}    
\def\e{\varepsilon}      \def\L{\varLambda}
\def\f{\varphi}      \def\p{\phi}       \def\Om{\varOmega}
\def\g{\gamma}       \def\si{\sigma}    \def\Si{\varSigma}
\def\i{\infty}             
       \def\vP{\varPi}    

\def\im{{\rm im}}
\def\bp{\begin{pmatrix}}
\def\ep{\end{pmatrix}}
\def\bY{\bar{Y}}
\def\bv{\bar{v}}
\def\bps{\bar{\psi}_{\scT}}
\def\tX{\wt{X}_{\scT}}
\def\tf{\wt{\f}_{\scT}}
\def\tu{\wt{u}_{\scT}}
\newtheoremstyle{thry}
{}      
{}      
{\sl}   
{}      
{\bf}   
{.}     
{.5em}  
{}      
\theoremstyle{thry}

\newtheorem{theorem}{Theorem}[section]
\newtheorem{proposition}[theorem]{Proposition}
\newtheorem{corollary}[theorem]{Corollary}
\newtheorem{lemma}[theorem]{Lemma}

\theoremstyle{definition}

\theoremstyle{remark}
\newtheorem{remark}[theorem]{Remark}

\def\punct{}
\newtheoremstyle{dotless}{}{}{\rm}{}{\bf}{\punct}{.5em}{}
\theoremstyle{dotless}
\newtheorem{taggedassumptionx}{}
\newenvironment{taggedassumption}[1]
 {\renewcommand\thetaggedassumptionx{#1}\taggedassumptionx}
 {\endtaggedassumptionx}

\makeatletter
   
   \@addtoreset{equation}{section}
   \newcommand{\setword}[2]{%
   \phantomsection
   #1\def\@currentlabel{\unexpanded{#1}}\label{#2}%
   }
\makeatother



\begin{document}

\title{\bf Partial Exponential Turnpike Phenomenon in Linear–Convex Optimal Control}
\author{Jingrui Sun\thanks{Department of Mathematics and SUSTech International Center for Mathematics,
                           Southern University of Science and Technology, Shenzhen, Guangdong,
                           518055, China (Email: sunjr@sustech.edu.cn).
                           This author is supported by NSFC grants 12322118 and 12271242, 
                           and by Shenzhen Science and Technology Program grant JCYJ20250604144337051.}  
~~~
Lvning Yuan\thanks{Corresponding author. Department of Mathematics, Southern University of Science and Technology, 
                   Shenzhen, Guangdong, 518055, China (Email: yuanln2026@126.com).} 
}

\date{}

\maketitle 

\no{\bf Abstract.}
This paper studies the long-time behavior of optimal solutions for a class of linear–convex optimal 
control problems.
We focus on a partial exponential turnpike property, established without imposing controllability or 
stabilizability assumptions, where the turnpike behavior holds only for a subset of initial states.
By means of a refined decomposition of the completely uncontrollable dynamics, we derive necessary 
structural conditions for the turnpike property and explicitly characterize the set of feasible 
initial states.
For each such initial state, we associate a static optimization problem whose unique solution 
determines the corresponding steady state–control pair.
For a class of convex stage cost functions, we prove the partial exponential turnpike property 
and quantify the convergence rate of the averaged finite-horizon optimal cost toward the steady 
optimal value.

\ms
\no{\bf Key words.}
Optimal control, linear-convex, exponential turnpike property, integral turnpike property, static optimization.

\ms
\no{\bf MSC codes.}  34H05, 49N05, 93C05.

\section{Introduction}\label{Sec:Intro}

The turnpike phenomenon is a fundamental long-time behavior in optimal control and optimization problems. 
It asserts that, for sufficiently long time horizons, optimal state trajectories and controls remain close 
to a steady state (or steady state–control pair) for most of the time interval, except for short transient 
phases near the initial and terminal times. Originating in economic growth theory, this phenomenon was first 
studied by Ramsey \cite{Ramsey1928} and von Neumann \cite{Neumann1945}, and later termed the {\it turnpike} 
by Dorfman--Samuelson--Solow \cite{Dorfman-Samuelson-Solow1958}, reflecting the idea that optimal trajectories 
spend most of their time traveling along a common ``highway". 

\ms 

The study of turnpike properties is motivated by both theoretical and practical considerations. 
From a theoretical perspective, turnpike results reveal a deep connection between finite-horizon 
optimal control problems and their associated static or infinite-horizon counterparts, thereby 
providing a structural understanding of long-time optimal behavior. From a practical and 
computational viewpoint, the turnpike phenomenon implies that one only needs to accurately 
resolve the short entry and exit phases of the optimal trajectory, while the interior portion 
can be well approximated by a steady solution. This insight significantly reduces computational 
complexity and improves numerical robustness in long-horizon optimal control problems.

\ms

In recent years, turnpike theory has been extensively developed for a wide range of optimal control problems, 
including deterministic and stochastic systems, continuous- and discrete-time models, as well as finite- and 
infinite-dimensional settings; see, for example, \cite{Breiten-Pfeiffer2020,Lou-Wang2019,Porretta-Zuazua2013,
Sakamoto-Zuazua2021,Sun-Wang-Yong2022,Sun-Yong2024:JDE,Sun-Yong2024:SICON,Trelat-Zhang-Zuazua2018,
Trelat-Zuazua2015,Damm-Grune-Stieler-Worthmann2014,Grune-Guglielmi2018,Trelat-Zhang2018,SCHIEbL2025SCION} 
and the references therein. For more comprehensive accounts, we refer the reader to the monographs 
\cite{Carlson-Haurie-Leizarowitz1991,Zaslavski2019} and the survey articles 
\cite{Zuazua2017,Faulwasser-Grune2022,Geshkovski-Zuazua2022,Trelat-ZuazuaaXirv}.
Many of these works establish {\it global} turnpike properties under additional structural assumptions, 
such as controllability or stabilizability of the system, observability or detectability of the adjoint 
dynamics, or strict dissipativity of the stage cost. 
Under these assumptions---particularly in linear or linear–quadratic settings---the optimal state trajectory 
and control are shown to converge, often at an exponential rate, to a unique steady state–control pair that 
is independent of the initial condition. 

\ms 

However, such structural assumptions are not intrinsic to the solvability of finite-horizon optimal 
control problems. 
Once these assumptions are removed, the classical global turnpike property generally fails. 
In this case, the long-time behavior of optimal solutions may depend sensitively on the initial state: 
different initial conditions may correspond to different steady pairs, and in some situations no steady 
pair exists that attracts all optimal trajectories. 

\ms   

Motivated by this issue, we investigate in this paper the {\it partial exponential turnpike property}
for a class of linear--convex optimal control problems. In contrast to the classical global turnpike
framework, the partial turnpike property is formulated with respect to individual initial states.
Specifically, for a given initial state, we seek a steady state--control pair---possibly depending on
the initial condition---such that the corresponding optimal state trajectory and control remain
exponentially close to this pair over most of the time horizon. Consequently, different initial states
may admit different steady pairs, and only a subset of initial states may exhibit such a property.

\ms

The optimal control problem considered in this paper is formulated as follows. 
Given a finite time horizon $T>0$, consider the controlled linear ordinary differential 
equation (ODE, for short)
\begin{equation}\label{Intro:state}\lt\{\begin{aligned}
\dot X(t) &= AX(t) + Bu(t) +b, \q t\in[0,T], \\
     X(0) &= x,
\end{aligned}\rt.\end{equation}
together with the convex cost functional
\begin{equation}\label{Intro:cost}
J_{\scT}(x;u(\cd)) \deq \int_{0}^{T} f(X(t),u(t))dt,
\end{equation}
where $A\in\dbR^{n\times n}$, $B\in\dbR^{n\times m}$, and $b\in\dbR^n$ are constant coefficients,
and $f(\cdot,\cdot)$ is a convex stage cost function.
Let $\sU[0,T]\deq L^2(0,T;\dbR^m)$ denote the space of square-integrable $\dbR^m$-valued functions on $[0,T]$.  
The associated linear--convex optimal control problem over $[0,T]$ is formulated as follows.

\ms

\noindent\textbf{Problem (LC)$_{\scT}$.}
Given $x\in\dbR^n$, find a control $\bu_{\scT}(\cd)\in\sU[0,T]$ such that 
\begin{equation}\label{Prob:LC}
J_{\scT}(x;\bu_{\scT}(\cd)) = \inf_{u(\cd)\in\sU[0,T]} J_{\scT}(x;u(\cd)) \equiv V_{\scT}(x).
\end{equation}
If such a control exists, it is called an {\it optimal control} for the initial state $x$.
The corresponding trajectory $\bX_{\scT}(\cdot)$ is called the {\it optimal state trajectory},
$(\bX_{\scT}(\cd),\bu_{\scT}(\cd))$ is called the {\it optimal pair}, and $V_{\scT}(\cdot)$ 
is referred to as the {\it value function} of Problem (LC)$_{\scT}$. 

\ms

The main objective of this paper is to characterize all initial states for which 
the exponential turnpike property holds and to establish the corresponding exponential 
estimates for Problem (LC)$_{\scT}$. 
Our analysis does not rely on controllability or stabilizability assumptions. 
Instead, by exploiting a refined decomposition of the state space associated with 
the completely uncontrollable dynamics, we derive sharp structural conditions 
under which turnpike behavior can occur. 
For each feasible initial state, we identify the appropriate static optimization 
problem whose solution yields the associated steady state–control pair.

\ms  

Our main contributions can be summarized as follows.
\begin{enumerate}[(i)]
\item First, in Section \ref{Sec:NC-TP}, we derive necessary structural conditions on the state
equation for the turnpike property to hold at a given initial state $x$ (see \autoref{prop:NC1}), 
and we characterize the set of feasible initial states for which the turnpike property may occur 
(see \autoref{prop:NC2}), based on spectral properties of the completely uncontrollable subsystem. 
No controllability or stabilizability assumptions are imposed on the control system. 
In particular, we show that only a proper subset of initial states may exhibit the turnpike property.
\item Second, in Section \ref{sec:steady-pair}, for each feasible initial state $x$, 
we characterize the associated steady state--control pair. We show that this steady pair 
is the unique solution to a static optimization problem intrinsically related to the feasible 
initial state $x$ (see \autoref{prop:steady-1} and \autoref{prop:ProbO}).
\item Finally, in Section \ref{Sec:PETP}, we show that for a certain class of convex stage 
cost functions, both Problem (LC)$_{\scT}$ and the associated static optimization problem 
admit unique solutions (see \autoref{thm:LC-solvable} and \autoref{thm:O-solvable}). 
Moreover, inspired by the elegant and powerful approach developed in Lou--Wang \cite{Lou-Wang2019}, 
we establish the exponential turnpike property for any feasible initial state $x$ quantify 
the convergence rate of the averaged finite-horizon optimal cost toward the steady optimal value 
(see \autoref{thm:Value-function}, \autoref{thm:PETP-X}, and \autoref{thm:PETP-u}).
\end{enumerate}

\section{Preliminaries}\label{Sec:Pre}

Throughout the paper, all vectors are assumed to be column vectors unless stated otherwise. 
For a function $f:\dbR^n\times\dbR^m\to\dbR$, we denote its first- and second-order partial 
derivatives by $f_{x}$, $f_{u}$, $f_{xx}$, $f_{xu}$, $f_{ux}$, and $f_{uu}$.
The gradient of $f$ is denoted by $\nabla f$, and its Hessian matrix by $\nabla^2 f$.  
The Euclidean space $\dbR^{n\times m}$ of real $n\times m$ matrices is equipped with the 
Frobenius inner product
$$
\lan M,N\ran \deq \tr(M^{\top}N),\q M,N\in\dbR^{n\times m}
$$
and the induced norm $|\cd|$, where $M^\top$ denotes the transpose of $M$ and $\tr(M^{\top}N)$ 
denotes the matrix trace of $M^{\top}N$.
For $A\in\dbR^{n\times n}$, we denote its spectrum by $\si(A)$, its generalized 
eigenspace associated with an eigenvalue $\l$ by $G(\l,A)$, its kernel in $\dbR^n$ by $\ker\!A$,
and its image in $\dbR^n$ by $\im A$. 
Let $\dbS^n$ be the space of real symmetric $n\times n$ matrices, and $\dbS_+^n$ the 
cone of positive definite matrices.
For $M,N\in\dbS^n$, we write $M\ges N$ (resp., $M>N$) if $M-N$ is positive
semidefinite (resp., positive definite).
We denote by $I_n$ the $n\times n$ identity matrix.  
Let $\dbC$ denote the set of complex numbers, and let $\dbC^n$ denote the $n$-dimensional 
complex vector space. 
For $\l\in\dbC$, we denote its real part by $\Re(\l)$, and write  
$$
\dbC^{-} \deq \{\l\in\dbC:\Re(\l)<0\} 
$$
for the open left half-plane. For a Euclidean space $\dbH$ and $p>0$, we define 
\begin{align*}
      C^2(\dbH) &\deq \big\{\f:\dbH \to\dbR \mid \f \text{ is twice continuously differentiable}\big\}, \\
  C([0,T];\dbH) &\deq \big\{\f:[0,T]\to\dbH \mid \f\text{ is continuous}\big\}, \\
L^{p}(0,T;\dbH) &\deq \lt\{\f:[0,T]\to\dbH\Bigm| \int_{0}^{T}|\f(t)|^{p} dt<\i\rt\}.
\end{align*}

\ss

For the reader’s convenience, we present the following simple result for later use.

\begin{lemma}\label{lmm:X-bound}
Let $A,C\in\dbR^{n\times n}$ and $f(\cd)\in L^2(0,T;\dbR^n)$. 
The solution $X(\cd)$ of
$$
\dot X(t) = AX(t) + f(t), \q t\in[0,T]
$$
has the following properties: 
\begin{enumerate}[\rm(i)]
\item For any $0\les t_1\les t_2\les T$, 
      \begin{equation}\label{X-bound:1}
      \sup_{t_1\les t\les t_2}|X(t)|^2 
      \les |X(t_1)|^2 + \big(2|A|+1\big)\int_{t_1}^{t_2}\[|X(s)|^2 + |f(s)|^2\]ds.
      \end{equation}
\item If $(A,C)$ is observable, then for any $\a\in(0,T]$, there exists a constant $K_\a>0$ 
      such that the following observability inequalities hold: 
      \begin{alignat}{3}
      &\label{X-bound:2-1} 
       |X(t)|^2\les K_{\a}\int_{t}^{t+\a}\[|CX(s)|^2+|f(s)|^2\]ds, \q&&\forall t\in[0,T-\a], \\
      &\label{X-bound:2-2}
       |X(t)|^2\les K_{\a}\int_{t-\a}^{t}\[|CX(s)|^2+|f(s)|^2\]ds, \q&&\forall t\in[\a,T].                  
      \end{alignat}
\end{enumerate}
\end{lemma}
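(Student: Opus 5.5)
For part (i), the plan is an energy estimate. Differentiate $|X(t)|^2$ along the solution:
$$\frac{d}{dt}|X(t)|^2 = 2\lan X(t),AX(t)+f(t)\ran \les 2|A||X(t)|^2 + |X(t)|^2+|f(t)|^2 .$$
The second step uses Cauchy--Schwarz, $|AX|\les|A||X|$ for the Frobenius norm, and $2\lan X,f\ran\les|X|^2+|f|^2$. Integrating from $t_1$ to any $t\in[t_1,t_2]$ gives
$$|X(t)|^2\les|X(t_1)|^2+(2|A|+1)\int_{t_1}^{t}\big[|X(s)|^2+|f(s)|^2\big]ds.$$
The integrand is nonnegative, so we may enlarge the upper limit to $t_2$ and then take the supremum in $t$. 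Since $X$ is only absolutely continuous, the differentiation is understood almost everywhere, which is harmless. This yields \eqref{X-bound:1}.

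For part (ii), we use the standard observability Gramian argument together with variation of constants. Because $(A,C)$ is observable, the Gramian $W_\a\deq\int_0^\a e^{A^\top s}C^\top Ce^{As}ds$ is positive definite for every $\a>0$. Hence
$$|\xi|^2\les\l_{\min}(W_\a)^{-1}\int_0^\a|Ce^{As}\xi|^2ds \qquad\text{for all }\xi\in\dbR^n.$$

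To prove \eqref{X-bound:2-1}, fix $t\in[0,T-\a]$ and write, for $s\in[0,\a]$,
$$X(t+s)=e^{As}X(t)+\int_0^s e^{A(s-r)}f(t+r)dr .$$
Then $Ce^{As}X(t)=CX(t+s)-C\int_0^s e^{A(s-r)}f(t+r)dr$. Squaring and using Cauchy--Schwarz, the convolution term is bounded by $|C|^2e^{2|A|\a}\a\int_0^\a|f(t+r)|^2dr$. Integrating over $s\in[0,\a]$ and applying the Gramian bound gives
$$|X(t)|^2\les K_\a\int_t^{t+\a}\big[|CX(s)|^2+|f(s)|^2\big]ds,$$
with $K_\a$ depending only on $A$, $C$ and $\a$, in particular independent of $t$ and $T$.

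For \eqref{X-bound:2-2}, fix $t\in[\a,T]$ and argue backward. First apply the forward estimate on the window $[t-\a,t]$ to bound $|X(t-\a)|^2$ by the integral over $[t-\a,t]$. Then represent
$$X(t)=e^{A\a}X(t-\a)+\int_{t-\a}^t e^{A(t-r)}f(r)dr,$$
and bound the integral term by Cauchy--Schwarz as before. Combining the two bounds gives \eqref{X-bound:2-2} after enlarging $K_\a$.

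The only point needing care is that $K_\a$ must be uniform in $t$ and $T$. This holds because the system is time-invariant: every constant depends only on the window length $\a$.
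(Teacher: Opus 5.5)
Your proposal is correct. Part (i) is the paper's energy argument verbatim. Your proof of \eqref{X-bound:2-1} is the same Gramian-plus-variation-of-constants argument, with one cosmetic difference: you use the quadratic form $X(t)^\top W_\alpha X(t)\ge\lambda_{\min}(W_\alpha)|X(t)|^2$, whereas the paper writes the vector identity $Q(\alpha)X(t)=\int_0^\alpha e^{rA^\top}C^\top[\cdots]\,dr$ and inverts $Q(\alpha)$.

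The genuine difference is in \eqref{X-bound:2-2}. The paper proves it from scratch. It observes that $(-A,C)$ is also observable, introduces a second Gramian $\int_0^\alpha e^{-rA^\top}C^\top Ce^{-rA}\,dr$, and uses the backward representation $Ce^{-rA}X(t)=CX(t-r)+\int_0^r Ce^{(s-r)A}f(t-s)\,ds$. You instead obtain \eqref{X-bound:2-2} as a corollary of \eqref{X-bound:2-1}. You apply the forward estimate at the point $t-\alpha$, which lies in $[0,T-\alpha]$ precisely when $t\in[\alpha,T]$. You then propagate forward over one window via $X(t)=e^{A\alpha}X(t-\alpha)+\int_{t-\alpha}^t e^{A(t-r)}f(r)\,dr$, which costs only a factor of order $e^{2|A|\alpha}$.

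Your route is shorter, reuses the first inequality, and needs neither the second Gramian nor the backward flow $e^{-rA}$. The paper's route treats both inequalities symmetrically and avoids the extra exponential factor in the constant. That factor is immaterial here, since all that matters is that $K_\alpha$ is uniform in $t$ and $T$. You correctly note that this uniformity holds because the system is time-invariant.
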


\begin{proof}
For (i), differentiating $|X(t)|^2$ gives 
\begin{align*}
\frac{d}{dt}|X(t)|^2 
&= 2\lan X(t),AX(t)+f(t)\ran \les 2|A||X(t)|^2 + |X(t)|^2+|f(t)|^2 \\ 
&\les \big(2|A|+1\big)\big(|X(t)|^2 + |f(t)|^2\big).
\end{align*}
Integrating both sides of the above inequality over $[t_1,t]$ yields 
\begin{align*}
|X(t)|^2 &\les |X(t_1)|^2 + \big(2|A|+1\big)\int_{t_1}^{t}\[|X(s)|^2 + |f(s)|^2\]ds \\
&\les |X(t_1)|^2 + \big(2|A|+1\big)\int_{t_1}^{t_2}\[|X(s)|^2 + |f(s)|^2\]ds, \q\forall t\in[t_1,t_2],
\end{align*}
from which \rf{X-bound:1} follows.  

\ms

For (ii), assume that $(A,C)$ is observable. Then the matrix 
$$
Q(\a) \deq \int_{0}^{\a} e^{rA^{\top}}C^{\top}Ce^{rA}dr 
$$
is invertible. Fix $t\in[0,T-\a]$. By the variation-of-constants formula, 
\begin{equation}\label{Sec2:lmm2-1}
X(t+r)=e^{rA}X(t) + \int_{0}^{r}e^{(r-s)A}f(t+s)ds, \q\forall r\in[0,\a].
\end{equation}
It follows that 
$$
e^{rA^{\top}}C^{\top}C e^{rA}X(t) 
= e^{rA^{\top}}C^{\top}\lt[CX(t+r)-\int_{0}^{r}Ce^{(r-s)A}f(t+s)ds\rt], \q\forall r\in[0,\a].
$$
Integrating both sides of the above over $r\in[0,\a]$ gives 
$$
Q(\a)X(t) = \int_{0}^{\a}e^{rA^{\top}}C^{\top}\lt[CX(t+r)-\int_{0}^{r}Ce^{(r-s)A}f(t+s)ds\rt]dr.
$$
Then by H\"{o}lder's inequality, we obtain
\begin{align}\label{eq:|X(t)|}
|X(t)|^2
&\les |Q(\a)^{-1}|^2\lt|\int_0^{\a}e^{rA^\top}C^\top\lt[CX(t+r)-\int_0^r Ce^{(r-s)A}f(t+s)ds\rt]dr\rt|^2 \nn\\
&\les K_{1,\a}\int_{0}^{\a} \lt[|CX(t+r)|^2+\lt|\int_{0}^{r}Ce^{(r-s)A}f(t+s)ds\rt|^2\rt]dr \nn\\
&\les K_{1,\a}\int_{0}^{\a}\lt[|CX(t+r)|^2+K_{2,\a}\int_{0}^{\a}|f(t+s)|^2ds\rt]dr \nn\\
&\les K_{\a}\int_{t}^{t+\a}\[|C X(s)|^2+|f(s)|^2\]ds. 
\end{align}
This proves \rf{X-bound:2-1}. 
To obtain \rf{X-bound:2-2}, we fix $t\in[\a,T]$ and apply the variation-of-constants formula to get
\begin{equation}\label{Sec2:X(t-r)}
X(t)=e^{rA}X(t-r) + \int_{0}^{r}e^{sA}f(t-s)ds, \q\forall r\in[0,\a].
\end{equation}
Since $(A,C)$ is observable, so is $(-A,C)$. Hence the matrix 
$$
\wt Q(\a) \deq \int_{0}^{\a} e^{-rA^{\top}}C^{\top}Ce^{-rA}dr 
$$
is invertible. Premultiplying \rf{Sec2:X(t-r)} by $e^{-rA^{\top}}C^{\top}Ce^{-rA}$ 
and integrating over $r\in[0,\a]$ yield
$$
\wt Q(\a)X(t) = \int_{0}^{\a}e^{-rA^{\top}}C^{\top}\lt[CX(t-r)+\int_{0}^{r}Ce^{(s-r)A}f(t-s)ds\rt]dr.
$$
Proceeding as in the derivation of \eqref{eq:|X(t)|}, we obtain the desired estimate \rf{X-bound:2-2}.   
\end{proof}

%
%

\section{Structural necessary conditions for turnpike properties}\label{Sec:NC-TP} 

Problem (LC)$_{\scT}$ is said to have the exponential turnpike property 
at an initial state $x\in\dbR^n$ if there exist $(x^*,u^*)\in\dbR^n\times\dbR^m$ 
and constants $K,\l>0$, independent of $T$ (but possibly depending on $x$), such that
\begin{equation}\label{Property:ETP}
|\bX^x_{\scT}(t)-x^*|+|\bu^x_{\scT}(t)-u^*|\les K\[e^{-\l t}+e^{-\l(T-t)}\],\q\ae~t\in[0,T], 
\end{equation}
where $(\bX^x_{\scT}(\cd),\bu^x_{\scT}(\cd))$ is the optimal pair of Problem (LC)$_{\scT}$ 
associated with $x$. This exponential estimate immediately implies the integral turnpike property
\begin{equation}\label{Property:ITP}
\lim_{T\to\i}{1\over T}\int_0^T \[|\bX^x_{\scT}(t)-x^*|^2+|\bu^x_{\scT}(t)-u^*|^2\]dt =0.
\end{equation}
In this section, we derive necessary conditions for the exponential and integral turnpike properties
at a given initial state $x$, and determine all initial states $x$ for which the turnpike property 
can possibly hold.


\ms 

We begin by decomposing the pair $(A,B)$ into its controllable and completely uncontrollable components. 
Let $\sC$ be the controllable subspace of the pair $(A,B)$, that is,  
\begin{equation}\label{Controllable--set}
\sC = \im(B,AB,\cdots,A^{n-1}B),
\end{equation}
and let $k=\dim\sC$. Choose an orthonormal basis $\{v_1,\dots,v_k\}$ of $\sC$ and 
an orthonormal basis $\{v_{k+1},\dots,v_n\}$ of $\sC^{\perp}$. Define 
\begin{equation}\label{Def-P12}
P_1 \deq (v_1,\dots,v_k),\q P_2 \deq (v_{k+1},\dots,v_n), \q P \deq (P_1,P_2),
\end{equation}
so that $P$ is orthogonal. By the Kalman controllable decomposition,  
\begin{equation}\label{Decomposition-AB}
P^{\top}AP = \bp A_{11} & A_{12} \\ 0 & A_{22}\ep, \q  P^{\top}B  = \bp B_1 \\ 0\ep, 
\end{equation}
where $A_{11}\in\dbR^{k\times k}$, $A_{12}\in\dbR^{k\times (n-k)}$, $A_{22}\in\dbR^{(n-k)\times(n-k)}$, $B_1\in\dbR^{k\times m}$, and $(A_{11},B_1)$ is controllable. 

\ms 
 
Now we present the following result, which provides a structural necessary condition for 
the integral turnpike property \rf{Property:ITP},  and hence for the exponential 
turnpike property \rf{Property:ETP}.

\begin{proposition}\label{prop:NC1}
Let $P$ be the orthogonal matrix defined in \rf{Def-P12}.
For an initial state $x\in\dbR^n$, the integral turnpike property \rf{Property:ITP} 
holds only if  
$$
b\in\im(A,B), \q(\text{equivalently, } P_2^\top b\in\im A_{22}).
$$ 
Moreover, any steady pair $(x^*,u^*)$ must satisfy the algebraic equilibrium condition
$$
Ax^*+Bu^*+b=0.
$$
\end{proposition}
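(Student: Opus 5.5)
The plan is to show the equilibrium identity $Ax^*+Bu^*+b=0$ first; the range condition then follows at once. Suppose \rf{Property:ITP} holds at $x$ with steady pair $(x^*,u^*)$. Write $\bX=\bX^x_{\scT}$, $\bu=\bu^x_{\scT}$, and integrate the state equation over $[0,T]$:
$$
\bX(T)-x=\int_0^T\big[A\bX(t)+B\bu(t)+b\big]dt .
$$
Dividing by $T$ and subtracting the constant $Ax^*+Bu^*+b$ gives
$$
\frac{\bX(T)-x}{T}-(Ax^*+Bu^*+b)=\frac1T\int_0^T\big[A(\bX(t)-x^*)+B(\bu(t)-u^*)\big]dt .
$$
By Cauchy--Schwarz, the right-hand side is bounded by $(|A|+|B|)\big(\frac1T\int_0^T[|\bX-x^*|^2+|\bu-u^*|^2]dt\big)^{1/2}$. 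This tends to $0$ by \rf{Property:ITP}.

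The main obstacle is the term $\bX(T)/T$, since \rf{Property:ITP} does not control pointwise values. I would avoid evaluating at $T$ and instead integrate over subintervals and average. For each $s\in[0,T]$,
$$
\bX(s)-x=\int_0^s(A\bX+B\bu+b)\,dt .
$$
Set $w=Ax^*+Bu^*+b$. Then
$$
\bX(s)-x-sw=\int_0^s\big[A(\bX-x^*)+B(\bu-u^*)\big]dt,
$$
and its modulus is at most $C\sqrt{s}\,\big(\int_0^T[\cdots]\big)^{1/2}=C\sqrt{s}\,\sqrt{T}\,\e_T$, where $\e_T\to0$. Averaging $|\bX(s)-x-sw|$ over $s\in[T/2,T]$ and using $|\bX(s)-x^*|$, which is small in $L^2$-average, we get
$$
\frac{2}{T}\int_{T/2}^T|\bX(s)-x^*|\,ds\les \sqrt2\big(\frac1T\int_0^T|\bX-x^*|^2\big)^{1/2}\to0 .
$$
Hence
$$
\frac2T\int_{T/2}^T|x^*-x-sw|\,ds\les C T\e_T+o(1).
$$
This is still not quite enough. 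Dividing by $T$ instead shows that $\frac{2}{T^2}\int_{T/2}^T|x^*-x-sw|\,ds$ is at least of order $|w|$ (since $|sw|\ges \frac T2|w|$ dominates $|x^*-x|$ for large $T$), while it is at most $C\e_T+o(1/T)\to0$. Therefore $w=0$, which is the algebraic equilibrium condition $Ax^*+Bu^*+b=0$.

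From $Ax^*+Bu^*=-b$ we read off $b\in\im(A,B)$. For the equivalent form, apply $P^\top$ and use \rf{Decomposition-AB}. The second block row gives $A_{22}P_2^\top x^*+P_2^\top b=0$, so $P_2^\top b\in\im A_{22}$. Conversely, if $P_2^\top b=-A_{22}y_2$, then since $(A_{11},B_1)$ is controllable, $(A_{11},B_1)$ has full row rank. Hence we can choose $y_1,u$ with $A_{11}y_1+B_1u=-P_1^\top b-A_{12}y_2$. Setting $y=P(y_1^\top,y_2^\top)^\top$ gives $Ay+Bu+b=0$, so $b\in\im(A,B)$. This proves the stated equivalence.
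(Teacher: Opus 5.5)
Your proof is correct. The linear-algebra part (both directions of $b\in\im(A,B)\Leftrightarrow P_2^\top b\in\im A_{22}$, using that $(A_{11},B_1)$ has full row rank) is essentially the paper's argument. The difference is in how the endpoint term is handled.

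The paper keeps the endpoint. It applies the energy estimate of \autoref{lmm:X-bound}(i) to get a bound of the form $|\bX(T)-x|^2\les C+L\int_0^T[|\bX-x^*|^2+|\bu-u^*|^2]dt+LT|w|^2$. Under \rf{Property:ITP} this says $|\bX(T)-x|^2=O(T)$. The paper then divides $T^2|w|^2\les 2|\bX(T)-x|^2+4T\int_0^T[\cdots]dt$ by $T^2$ and lets $T\to\i$.

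You never need a pointwise bound on the trajectory. You average the identity $\bX(s)-x-sw=\int_0^s[\cdots]dt$ over $s\in[T/2,T]$. This trades the uncontrolled value $\bX(s)$ for its mean, which \rf{Property:ITP} pins to $x^*$. The lower bound $|x^*-x-sw|\ges \frac T2|w|-|x-x^*|$ on $[T/2,T]$ then gives $\frac{|w|}{2}\les C\e_T+O(1/T)$.

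Your route uses only the integrated state equation and Cauchy--Schwarz, with no Gronwall-type energy estimate. The paper's route is shorter and reuses a lemma it needs later anyway.

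Two small remarks. First, your opening paragraph, which shows $(\bX(T)-x)/T\to w$, is not used afterwards. You could drop it, or finish directly from it: \autoref{lmm:X-bound}(i) gives $|\bX(T)-x|=O(\sqrt T)$, which is exactly the paper's proof. Second, the phrase ``at least of order $|w|$'' should be replaced by the explicit inequality $\frac{2}{T^2}\int_{T/2}^T|x^*-x-sw|\,ds\ges \frac{|w|}{2}-\frac{|x-x^*|}{T}$.
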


\begin{proof}
First, noting that
\begin{align*}
{d\over dt}[\bX^x_{\scT}(t)-x] &= A[\bX^x_{\scT}(t)-x^*] + B[\bu^x_{\scT}(t)-u^*] + (Ax^*+Bu^*+b),
\end{align*}
we obtain by \autoref{lmm:X-bound} (i) that 
\begin{align}\label{25-12-7:eqn1}
|\bX^x_{\scT}(T)-x|^2 
&\les L\int_0^T\[|\bX^x_{\scT}(t)-x^*|^2 + |\bu^x_{\scT}(t)-u^*|^2 + |Ax^*+Bu^*+b|^2\]dt \nn\\
&= L\int_0^T\[|\bX^x_{\scT}(t)-x^*|^2 + |\bu^x_{\scT}(t)-u^*|^2\]dt + LT|Ax^*+Bu^*+b|^2,
\end{align}
for some constant $L>0$ independent of $T$. Moreover,
\begin{align*}
\bX^x_{\scT}(T)-x 
&=\int_{0}^{T}\[A[\bX^x_{\scT}(t)-x^*] + B[\bu^x_{\scT}(t)-u^*]\]dt + (Ax^*+Bu^*+b)T,
\end{align*}
from which it follows that 
\begin{align*}
T^2|Ax^*+Bu^*+b|^2 
&\les 2|\bX^x_{\scT}(T)-x|^2 + 2\lt|\int_0^T\[|\bX^x_{\scT}(t)-x^*|+|\bu^x_{\scT}(t)-u^*|\]dt\rt|^2 \\
&\les 2|\bX^x_{\scT}(T)-x|^2 + 2T \int_0^T\[|\bX^x_{\scT}(t)-x^*|+|\bu^x_{\scT}(t)-u^*|\]^2dt \\
&\les 2|\bX^x_{\scT}(T)-x|^2 + 4T \int_0^T\[|\bX^x_{\scT}(t)-x^*|^2+|\bu^x_{\scT}(t)-u^*|^2\]dt. 
\end{align*}
Dividing the above inequality by $T^2$ and letting $T\to\i$, 
we conclude from \rf{25-12-7:eqn1} and the integral turnpike property \rf{Property:ITP} that  
$$
Ax^* + Bu^* + b = 0,
$$
which, in turn, yields $b\in\im(A,B)$. 
Finally, since $P$ is orthogonal, $b\in\im(A,B)$ is equivalent to 
\begin{equation}\label{25-12-7:eqn2}
\bp P_1^\top b \\ P_2^\top b\ep
=P^\top b \in \im(P^\top AP,P^\top B) 
=\im\bp A_{11} & A_{12} & B_1 \\ 0 & A_{22} & 0\ep.  
\end{equation}
Since $(A_{11},B_1)$ is controllable, $\im(A_{11},A_{12},B_1)=\dbR^k$.
Thus, \rf{25-12-7:eqn2} holds if and only if $ P_2^\top b\in\im A_{22}$.  
\end{proof}

In view of \autoref{prop:NC1}, the condition $b\in \im (A,B)$ is structurally 
necessary for the integral turnpike property. 
Hence, throughout the sequel we impose the following standing assumption.   

\begin{taggedassumption}{(A1)}\label{A1} 
$b\in\im(A,B)$, or equivalently, there exists $c\in\dbR^{n-k}$ such that $P_2^\top b=A_{22}c$. 
\end{taggedassumption}

Next, we provide a necessary structural condition on the initial state $x$ for 
the integral turnpike property to hold. 
To this end, recall that $\dbC^-$ denotes the set of all complex numbers $\l$ 
whose real part is negative, and that for any matrix $M\in\dbR^{n\times n}$, 
we write $\si(M)$ for its spectrum, $\ker\!M$ for its kernel in $\dbR^n$, 
and $G(\l,M)$ for its generalized eigenspace corresponding to an eigenvalue $\l\in\si(M)$. Define   
\begin{equation}\label{Sec3:DefV}
\sG_1\deq \lt(\bigoplus_{\l\in\si(A_{22})\,\cap\,\dbC^-}G(\l,A_{22})\rt)\cap \dbR^{n-k}, \q 
\sG_2\deq \ker\!A_{22} \subseteq \dbR^{n-k}.
\end{equation}

\begin{proposition}\label{prop:NC2}
Let {\rm\ref{A1}} hold. Then for any initial state $x\in\dbR^n$, 
the integral turnpike property \rf{Property:ITP} holds at $x$ only if   
\begin{equation}\label{Sec3:NCx0}
P_2^{\top}x + c \in \sG_1 \oplus \sG_2,  
\end{equation}
where $c\in\dbR^{n-k}$ is any vector satisfying $A_{22}c=P_2^\top b$.  
\end{proposition}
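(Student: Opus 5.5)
The plan is to project the dynamics onto $\sC^\perp$, where the control has no effect. Set $Y(t)\deq P_2^\top \bX^x_{\scT}(t)+c\in\dbR^{n-k}$. Since $P_2^\top A = A_{22}P_2^\top$ (the lower-left block of $P^\top AP$ vanishes), $P_2^\top B=0$ and $P_2^\top b=A_{22}c$, the projected equation is autonomous:
$$
\dot Y(t)=A_{22}Y(t),\q Y(0)=P_2^\top x+c, \q\text{so}\q Y(t)=e^{tA_{22}}y_0,\ y_0\deq P_2^\top x+c .
$$
In particular $Y$ does not depend on $T$. Likewise set $y^*\deq P_2^\top x^*+c$. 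By \autoref{prop:NC1}, $Ax^*+Bu^*+b=0$; projecting with $P_2^\top$ gives $A_{22}y^*=0$, i.e. $y^*\in\sG_2$.

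Next, I translate the integral turnpike property \rf{Property:ITP} into a statement about $Y$. Since $|P_2^\top v|\les|v|$, we get
$$
\frac1T\int_0^T|e^{tA_{22}}y_0-y^*|^2dt\les \frac1T\int_0^T|\bX^x_{\scT}(t)-x^*|^2dt\to0 .
$$
The left side is independent of the horizon, so the Cesàro mean of $g(t)\deq|e^{tA_{22}}y_0-y^*|^2$ tends to zero.

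The main step is to show that this forces $y_0\in\sG_1\oplus\sG_2$. Decompose $\dbR^{n-k}$ (real parts of the complex primary decomposition) as $\sG_1\oplus\sG_2\oplus\sG_3\oplus\sG_4$. Here $\sG_3$ collects the generalized eigenspaces with $\Re\l>0$, or with $\Re\l=0$ and $\l\neq0$. $\sG_4$ collects the part of $G(0,A_{22})$ complementary to $\ker A_{22}$, i.e. Jordan chains of length $\ge2$ at $0$; I take $\sG_2=\ker A_{22}$ inside $G(0,A_{22})$ with a complement. Write $y_0=z_1+z_2+z_3$, with $z_1\in\sG_1$, $z_2\in\ker A_{22}$, and $z_3$ in the remaining invariant part. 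Then $e^{tA_{22}}z_1\to0$ exponentially and $e^{tA_{22}}z_2=z_2$. Hence the Cesàro mean of $|e^{tA_{22}}z_3+(z_2-y^*)|^2$ tends to zero. Because the spectral projections are bounded, this yields that the Cesàro mean of $|\Pi(e^{tA_{22}}z_3)|^2$ tends to zero for each spectral component $\Pi$. The constant $z_2-y^*$ lies in the $\l=0$ component, so it must be handled together with the nilpotent part.

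I then argue component by component, which is where care is needed.
\begin{itemize}
\item For $\Re\l>0$, $|e^{tA_{22}}w|$ grows exponentially unless $w=0$.
\item For $\l=i\omega$ with $\omega\neq0$, $e^{tA_{22}}w$ consists of polynomial factors times the oscillations $\cos\omega t$ and $\sin\omega t$. Its Cesàro mean of squared norm has a positive limit inferior unless $w=0$. For the top-degree polynomial coefficient, use the fact that the mean of $|a\cos\omega t+b\sin\omega t|^2$ equals $(|a|^2+|b|^2)/2$.
\item For $\l=0$, $e^{tA_{22}}w+(z_2-y^*)$ is a vector polynomial in $t$. Its Cesàro mean of squared norm tends to zero only if the polynomial is identically zero. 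Hence $A_{22}w=0$, so $w\in\ker A_{22}$, and $z_2=y^*$.
\end{itemize}
Consequently $z_3$ lies in $\ker A_{22}$, and therefore $y_0\in\sG_1\oplus\sG_2$, which is \rf{Sec3:NCx0}.

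Finally, independence from the choice of $c$: two solutions differ by an element of $\ker A_{22}=\sG_2$, so the condition is unchanged. I expect the oscillatory and nilpotent bookkeeping to be the main technical obstacle. I would handle it by working over $\dbC$ with a Jordan basis and extracting the highest-degree term in $t$.
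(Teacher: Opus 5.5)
Your proposal is correct and follows essentially the paper's route: you project onto $\sC^\perp$ to obtain the horizon-independent trajectory $e^{tA_{22}}(P_2^\top x+c)$, transfer \rf{Property:ITP} to a Ces\`aro-mean condition using $A_{22}(P_2^\top x^*+c)=0$ from \autoref{prop:NC1}, and then eliminate the spectral components of $A_{22}$ one at a time. The difference is only technical: the paper first derives the pointwise bound $|Y_2(t)+c|\les K_1(1+\sqrt t)$ via \autoref{lmm:X-bound}~(i) to exclude unstable modes and nontrivial Jordan blocks on the imaginary axis, then uses periodicity for $\lambda=i\omega\neq0$, whereas you read everything off the Ces\`aro asymptotics of (oscillating) polynomials, which also yields $z_2=y^*$ (the content of \autoref{prop:steady-1}) at no extra cost.
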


\begin{proof}
Extend the control $\bu^x_{\scT}(\cd)$ to a function $v_{\scT}(\cd)$ on $[0,\i)$ by 
$$
v_{\scT}(t) \deq \bu^x_{\scT}(t){\bf1}_{[0,T]}(t), \q t\in[0,\i),
$$
and let $X(\cd)$ be the solution of 
$$\lt\{\begin{aligned}
\dot X(t) &= AX(t) + Bv_{\scT}(t) +b, \q t\in[0,\i), \\
     X(0) &= x.
\end{aligned}\rt.$$
Then we have
$$
X(t) = \bX^x_{\scT}(t), \q\forall t\in[0,T].  
$$
For $i=1,2$, set 
\begin{equation}\label{Def:Y-d-y}
Y_i(t)\deq P_i^{\top}X(t), \q t\in[0,\i). 
\end{equation}
Using \rf{Decomposition-AB}, we obtain  
\begin{equation}\label{Split-state}\lt\{\begin{aligned}
\dot Y_1(t) &= A_{11}Y_1(t) + B_1v_{\scT}(t) +A_{12}Y_2(t) + P_1^{\top}b, \q  t\in[0,\i), \\
\dot Y_2(t) &= A_{22}Y_2(t) + P_2^{\top}b, \q  t\in[0,\i), \\
     Y_1(0) &=P_1^{\top}x, \q Y_2(0)=P_2^{\top}x.  
\end{aligned}\rt.\end{equation} 
In particular, we observe that $Y_2(\cd)$ does not depend on $T$. 
Since $P_2^\top b=A_{22}c$, we have 
$$
\frac{d}{dt}[Y_2(t)+c]=A_{22}[Y_2(t)+c]. 
$$
Consequently,  
\begin{equation}\label{26-1-17:Y2}
Y_2(t)+c=e^{tA_{22}}(P_2^{\top}x+c), \q t\in[0,\i). 
\end{equation}
Moreover,  
\begin{align}\label{25-12-10:eqn1}
\frac{1}{T}\int_0^T\big|Y_2(t)-P_2^{\top}x^*\big|^2dt
&=\frac{1}{T}\int_0^T\big|P_2^{\top}[\bX^x_{\scT}(t)-x^*]\big|^2dt \nn\\
&\les\frac{1}{T}\int_0^T\big|\bX^x_{\scT}(t)-x^*\big|^2dt \to 0, \q\text{as } T\to\i.  
\end{align}
By \autoref{prop:NC1}, the steady pair $(x^*,u^*)$ satisfies
$$
Ax^*+Bu^*+b=0.
$$ 
Premultiplying by $P^\top$ and using the block decomposition \rf{Decomposition-AB}, we obtain
$$
0=P^\top APP^\top x^* + P^\top Bu^* + P^\top b 
 =\bp A_{11} & A_{12} \\ 0 & A_{22}\ep\bp P_1^\top x^* \\ P_2^\top x^*\ep
 +\bp B_1u^* \\ 0\ep + \bp P_1^\top b \\ P_2^\top b\ep.
$$
Taking the second block row yields
$P_2^\top b=-A_{22}P_2^\top x^*$, and hence
$$
\frac{d}{dt}[Y_2(t)-P_2^\top x^*]= A_{22}Y_2(t) + P_2^{\top}b =A_{22}[Y_2(t)-P_2^\top x^*], \q t\ges0.
$$
By \autoref{lmm:X-bound} (i) and \rf{25-12-10:eqn1}, we see that as $T\to\i$,
$$
\frac{1}{T}|Y_2(T)-P_2^\top x^*|^2 
\les\frac{|P_2^\top x-P_2^\top x^*|^2}{T} + \frac{2|A_{22}|+1}{T}\int_0^T|Y_2(t)-P_2^\top x^*|^2ds\to 0. 
$$
This implies that there exists a constant $K_1>0$ such that
\begin{equation}\label{Y2+c:bound}
|Y_2(t)+c|\les K_1(1+\sqrt{t}), \q\forall t\ges 0. 
\end{equation}
Let $\l_1$, $\l_2$, $\dots$, $\l_\ell$ be the distinct eigenvalues of $A_{22}$ with
algebraic multiplicities $r_1$, $r_2$, $\dots$, $r_\ell$, respectively. 
Then 
$$
\dbC^{n-k}=G(\l_1,A_{22})\oplus \cdots \oplus G(\l_\ell,A_{22}).
$$
Let $Q_j$ be the projection from $\dbC^{n-k}$ onto $G_j(\l_j,A_{22})$ associated with the above direct-sum decomposition.
Then there exists a constant $K_2>0$, such that 
\begin{equation}\label{26-1-17:Qv}
|Q_jv| \les K_2|v|,\q \forall v\in\dbC^{n-k},\q j=1,2,\cdots,\ell.
\end{equation}
Now, for each $j$, define
$
w_j\deq Q_j(P_2^{\top}x+c).
$
Then we have from \eqref{26-1-17:Y2} that 
\begin{equation}\label{26-1-17:QYj}
Q_j(Y_2(t)+c) = e^{tA_{22}}w_j= e^{\l_j t}Z_j(t),
\end{equation}
where 
$$
Z_j(t)\deq\lt[I+t(A_{22}-\l_j I)+ \cdots + \frac{t^{r_j-1}}{(r_j-1)!}(A_{22}-\l_j I)^{r_j-1}\rt]w_j
$$
is a vector-valued polynomial of degree at most $r_j-1$.
From \rf{Y2+c:bound}--\rf{26-1-17:QYj}, we know that the term $e^{\l_j t}Z_j(t)$ grows at most on the order of $\sqrt{t}$. 
Hence, for each $j$ with $w_j\neq0$, this is possible only if
\begin{equation}\label{26-1-17:lambda}
\Re(\l_j)<0,\q \text{or} \q \Re(\l_j)=0 \text{ and } A_{22}w_j=\l_j w_j.
\end{equation}
Consider now the second case in above. 
If $\l_j$ is a nonzero pure imaginary number and $w_j\neq 0$, then 
$$
h(t)\deq Q_j(Y_2(t)-P_2^{\top}x^*)=e^{\l_i t}w_j -Q_j(P_2^{\top}x^*+c)
$$
is a nonzero continuous periodic function with period $\t\deq 2\pi/|\l_j|$, which follows a contradiction by \rf{25-12-10:eqn1} and \rf{26-1-17:Qv} that
$$
0<\frac{1}{\t}\int_{0}^{\t}|h(t)|^2dt=\frac{1}{N\t}\int_{0}^{N\t}|h(t)|^2dt 
\les \frac{K_2}{N\t}\int_{0}^{N\t}\big|Y_2(t)-P_2^{\top}x^*\big|^2dt\to 0, \q \text{as }N\to\i. 
$$
Combing this with \rf{26-1-17:lambda} gives $P_2^\top x+c\in\sG_1\oplus\sG_2$.  
\end{proof}

In what follows, we call an initial state $x\in\dbR^n$ satisfying \rf{Sec3:NCx0} 
a feasible initial state.

\section{The steady pair}\label{sec:steady-pair} 

In this section, we characterize the steady pair $(x^*,u^*)$ associated with a given initial 
state $x$ for which the integral turnpike property \rf{Property:ITP} holds.

\ms 

Recall the notation introduced in \rf{Controllable--set}--\rf{Decomposition-AB} and the spaces 
$$
\sG_1\deq \lt(\bigoplus_{\l\in\si(A_{22})\,\cap\,\dbC^-}G(\l,A_{22})\rt)\cap \dbR^{n-k}, \q 
\sG_2\deq \ker\!A_{22} \subseteq \dbR^{n-k}
$$
as defined in \rf{Sec3:DefV}. For $i=1,2$, let 
\begin{equation}\label{def:Qi}
Q_i:\sG_1\oplus\sG_2\to\sG_i
\end{equation}
denote the projection from $\sG_1\oplus\sG_2$ onto $\sG_i$.  
Let $c\in\dbR^{n-k}$ be a vector such that $A_{22}c=P_2^\top b$. 
Define the set of feasible initial states:
\begin{equation}\label{def:feasible-x}
\sX \deq \bigl\{x\in\dbR^n: P_2^\top x+c\in \sG_1\oplus\sG_2\bigr\}.
\end{equation}
Recall from \autoref{prop:NC2} that the integral turnpike property \rf{Property:ITP} holds 
at $x$ only if $x\in\sX$.

\ms 

First, we present the following result, which shows that the steady state $x^*$ generally 
depends on the initial state $x$.   

\begin{proposition}\label{prop:steady-1}
Let {\rm\ref{A1}} hold, and let  $c\in\dbR^{n-k}$ be any vector satisfying $A_{22}c=P_2^\top b$. 
If the integral turnpike property \rf{Property:ITP} holds at the initial state $x\in\dbR^n$, 
then the corresponding steady state $x^*$ satisfies 
$$
P_2^\top x^*+c = Q_2(P_2^\top x+c). 
$$
\end{proposition}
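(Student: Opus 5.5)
The plan is to reuse the machinery from the proof of \autoref{prop:NC2}. Extend the optimal control by zero beyond $T$ and set $Y_2(t)=P_2^\top X(t)$. Then $Y_2$ is independent of $T$ and satisfies
$$
Y_2(t)+c=e^{tA_{22}}(P_2^\top x+c), \q t\ges0.
$$
By \autoref{prop:NC2}, $x\in\sX$, so we can write $P_2^\top x+c=w_1+w_2$ with $w_1\deq Q_1(P_2^\top x+c)\in\sG_1$ and $w_2\deq Q_2(P_2^\top x+c)\in\sG_2$. Since $w_2\in\ker A_{22}$, we have $e^{tA_{22}}w_2=w_2$. Since $w_1$ lies in the sum of generalized eigenspaces for eigenvalues with negative real part, and this sum is $A_{22}$-invariant, there are constants $K,\mu>0$ with $|e^{tA_{22}}w_1|\les Ke^{-\mu t}|w_1|$. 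Hence
$$
Y_2(t)+c = w_2 + e^{tA_{22}}w_1 \to w_2 \q\text{exponentially as } t\to\i.
$$

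Next, use the averaged estimate \rf{25-12-10:eqn1} from the proof of \autoref{prop:NC2}, which rests only on the integral turnpike property:
$$
\frac1T\int_0^T|Y_2(t)-P_2^\top x^*|^2dt\to0 .
$$
Set $d\deq P_2^\top x^*+c-w_2$, a constant vector. Then $Y_2(t)-P_2^\top x^*=e^{tA_{22}}w_1-d$. The term $e^{tA_{22}}w_1$ is square integrable on $[0,\i)$, so $\frac1T\int_0^T|e^{tA_{22}}w_1|^2dt\to0$. Using $|d|^2\les 2|e^{tA_{22}}w_1-d|^2+2|e^{tA_{22}}w_1|^2$ and averaging over $[0,T]$ gives $|d|^2\les 2\cdot o(1)+o(1)\to0$. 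Therefore $d=0$, which is exactly
$$
P_2^\top x^*+c=Q_2(P_2^\top x+c).
$$

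The main point requiring care is that the decomposition is well defined. We need $w_1,w_2$ to be real vectors and the projections $Q_i$ to be those of the real direct sum $\sG_1\oplus\sG_2$. This holds because $\sG_1$ is the intersection with $\dbR^{n-k}$ of a conjugation-invariant sum of generalized eigenspaces. The sum is direct since $0\notin\dbC^-$. The exponential decay on $\sG_1$ follows from the Jordan form restricted to that invariant subspace, exactly as with the polynomial factors $Z_j$ in the proof of \autoref{prop:NC2}. Finally, the identity is independent of the choice of $c$, because $c$ appears on both sides with the same $Q_2$ applied, consistent with how $\sX$ is defined. Beyond these checks the argument is routine.
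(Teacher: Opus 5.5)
Your argument is correct and is essentially the paper's proof. You decompose $P_2^\top x+c=y_1+y_2$ via $Q_1,Q_2$, use $e^{tA_{22}}y_2=y_2$ and $e^{tA_{22}}y_1\to0$, and combine this with the averaged estimate \eqref{25-12-10:eqn1} through $|a+b|^2\les 2|a|^2+2|b|^2$ to force $P_2^\top x^*+c=Q_2(P_2^\top x+c)$; the only slip is that both averaged terms in your final bound should carry the factor $2$, which does not affect the conclusion.
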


\begin{proof}
By \autoref{prop:NC2}, we have the unique decomposition  
\begin{equation}\label{Sec3:def-y1y2}
P_2^\top x + c = y_1 + y_2, 
\end{equation}
where $y_i=Q_i(P_2^\top x+c)\in\sG_i$, $i=1,2$.
Consider the function $Y_2(\cd)$ in \rf{Split-state}.  
Form the proof of \autoref{prop:NC2}, we see that 
$$
Y_2(t)+c = e^{tA_{22}}(P_2^{\top}x+c) =  e^{tA_{22}}y_1 +  e^{tA_{22}}y_2, \q t\in[0,\i). 
$$
Since $y_1\in\sG_1$ and $y_2\in\sG_2=\ker\!A_{22}$, it follows that 
$$
\lim_{t\to\i}[Y_2(t)+c] =  \lim_{t\to\i}e^{tA_{22}}y_1 + y_2 = y_2, 
$$
and hence 
$$
\lim_{T\to\i}{1\over T}\int_0^T |Y_2(t)+c-y_2|^2 dt =0. 
$$
Consequently, by using \rf{25-12-10:eqn1} we obtain
\begin{align*}
|P_2^\top x^*+c-y_2|^2 
&= \lim_{T\to\i}{1\over T}\int_0^T |P_2^\top x^*-Y_2(t) + Y_2(t)+c-y_2|^2 dt \\
&\les 2\lim_{T\to\i}\lt[{1\over T}\int_0^T|P_2^\top x^*-Y_2(t)|^2dt+{1\over T}\int_0^T|Y_2(t)+c-y_2|^2 dt\rt] \\
&= 0.
\end{align*}
This completes the proof.  
\end{proof}

Next, for any feasible initial state $x\in\sX$, we define the $x$-dependent set
$$
\sV_{x}\deq \bigl\{(z,u)\in\dbR^n\times\dbR^m \nid Az+ Bu + b=0,~ P_2^\top z+c=Q_2(P_2^\top x+c)\bigr\} 
$$
and introduce the following static optimization problem.  

\ms

{\bf Problem (O).} For a given $x\in\sX$, find $(x^*,u^*)\in\sV_{x}$ such that 
\begin{equation}\label{Prob:Ox0}
f(x^*,u^*)=\inf_{(z,u)\in\sV_x} f(z,u)\equiv V^*(x).
\end{equation}

We present the following result, which shows that if the integral turnpike property 
\rf{Property:ITP} holds at a feasible initial state $x\in\sX$, then the associated steady pair 
$(x^*,u^*)$ solves Problem (O) corresponding to this initial state $x$.

\begin{proposition}\label{prop:ProbO}
Let {\rm\ref{A1}} hold and let $f:\dbR^n\times\dbR^m\to\dbR$ be convex.  
If the integral turnpike property \rf{Property:ITP} holds at an initial state $x\in\sX$, 
then the associated steady pair $(x^*,u^*)\in\sV_x$, and 
\begin{equation}\label{eq:f*<f}
f(x^*,u^*) \les f(z,u), \q\forall (z,u)\in\sV_x. 
\end{equation}
\end{proposition}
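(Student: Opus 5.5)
Membership $(x^*,u^*)\in\sV_x$ is immediate from the earlier results: \autoref{prop:NC1} gives $Ax^*+Bu^*+b=0$, and \autoref{prop:steady-1} gives $P_2^\top x^*+c=Q_2(P_2^\top x+c)$. The real content is the optimality inequality \rf{eq:f*<f}. I would obtain it by a comparison argument. Fix $(z,u)\in\sV_x$ and build, for each large $T$, an admissible control $\wt u_{\scT}(\cd)$ that drives the state from $x$ to a neighbourhood of $z$ in bounded time. It then holds $u$ for the rest of the horizon. This gives
$$
V_{\scT}(x)\les J_{\scT}(x;\wt u_{\scT}(\cd))\les Tf(z,u)+K,
$$
with $K$ independent of $T$.

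\textbf{Construction of the comparison control.} In the coordinates \rf{Split-state}, the $Y_2$-component is uncontrolled and equals $Y_2(t)+c=e^{tA_{22}}y_1+y_2$, where $y_i=Q_i(P_2^\top x+c)$. Since $P_2^\top z+c=y_2$, we have $Y_2(t)-P_2^\top z=e^{tA_{22}}y_1$, which decays exponentially because $y_1\in\sG_1$. For the controllable part, I set $w=u+\d(t)$ and $\d Y_1=Y_1-P_1^\top z$. Using $Az+Bu+b=0$ (first block row), this gives
$$
\dot{\d Y_1}=A_{11}\d Y_1+B_1\d(t)+A_{12}e^{tA_{22}}y_1 .
$$
Because $(A_{11},B_1)$ is controllable, there is a stabilizing feedback $\d=F\d Y_1$ with $A_{11}+B_1F$ Hurwitz. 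The forcing term decays exponentially, so $\d Y_1(t)$ and $\d(t)$ decay exponentially. Hence the pair $(\wt X(t),\wt u(t))$ converges to $(z,u)$ exponentially, and $\int_0^\i(|\wt X-z|^2+|\wt u-u|^2)dt<\i$. Restricting to $[0,T]$ gives an admissible control for every $T$.

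\textbf{Cost comparison.} It remains to bound $\int_0^T[f(\wt X,\wt u)-f(z,u)]dt$ uniformly in $T$. This is the main obstacle, since $f$ is only assumed convex. A convex function on $\dbR^{n}\times\dbR^m$ is finite, hence locally Lipschitz. The comparison trajectory stays in a fixed compact set independent of $T$, so $|f(\wt X,\wt u)-f(z,u)|\les L(|\wt X-z|+|\wt u-u|)$, which is integrable on $[0,\i)$ by the exponential decay. Therefore $\frac1T V_{\scT}(x)\les f(z,u)+K/T$.

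\textbf{Lower bound.} Apply Jensen's inequality to the optimal pair:
$$
\frac1T V_{\scT}(x)=\frac1T\int_0^T f(\bX^x_{\scT},\bu^x_{\scT})dt\ges f\Big(\frac1T\int_0^T\bX^x_{\scT}dt,\ \frac1T\int_0^T\bu^x_{\scT}dt\Big).
$$
By Cauchy--Schwarz and \rf{Property:ITP}, the time averages converge to $(x^*,u^*)$. Continuity of $f$ then yields $\liminf_{T\to\i}\frac1TV_{\scT}(x)\ges f(x^*,u^*)$. Combining this with the upper bound and letting $T\to\i$ gives $f(x^*,u^*)\les f(z,u)$ for every $(z,u)\in\sV_x$, which is \rf{eq:f*<f}.
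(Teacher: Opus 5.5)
Your proposal is correct and follows the paper's proof: membership comes from \autoref{prop:NC1} and \autoref{prop:steady-1}, the lower bound comes from Jensen's inequality, and the comparison control uses a stabilizing feedback $F$ on the controllable block. Note that this feedback acts for all $t\ges0$, not as the steer-then-hold scheme your opening sentence describes; literally holding $u$ would fail when $A_{11}$ is unstable. The only real difference is the cost comparison: you use local Lipschitz continuity of $f$ and exponential (hence integrable) decay to get an explicit $O(1/T)$ upper bound, whereas the paper uses uniform continuity on a compact set, Markov's inequality, and only square integrability of the deviation.
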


\begin{proof}
From \autoref{prop:NC1} and \autoref{prop:steady-1}, we see that $(x^*,u^*)\in\sV_x$.  
The proof of \rf{eq:f*<f} is divided into the following steps.

\ms 

{\it Step 1.} Let $(\bX_{\scT}^x(t),\bu_{\scT}^x(t))$ be the optimal pair of 
Problem (LC)$_{\scT}$ associated with $x$. We conclude that  
\begin{equation}\label{lower-bound}
f(x^*,u^*) \les \liminf_{T\to\i}\frac{1}{T}\int_0^Tf(\bX_{\scT}^x(t),\bu_{\scT}^x(t))dt.
\end{equation}

Indeed, by H\"{o}lder's inequality, 
\begin{align*}
&\lt|\frac{1}{T}\int_0^T\bX_{\scT}^x(t)dt-x^*\rt| + \lt|\frac{1}{T}\int_0^T\bu_{\scT}^x(t)dt-u^*\rt| \\
&\q\les \frac{1}{T}\int_0^T\[|\bX_{\scT}^x(t)-x^*| + |\bu_{\scT}^x(t)-u^*|\]dt \\
&\q\les \frac{1}{T}\lt\{T\int_0^T\[|\bX_{\scT}^x(t)-x^*|^2 + |\bu_{\scT}^x(t)-u^*|^2\]dt\rt\}^{1/2}.  
\end{align*}
Thus, the integral turnpike property \rf{Property:ITP} implies   
$$
x^* = \lim_{T\to\i}\frac{1}{T}\int_0^T \bX_{\scT}^x(t) dt, \q 
u^* = \lim_{T\to\i}\frac{1}{T}\int_0^T \bu_{\scT}^x(t) dt.
$$
Since $f$ is convex on $\dbR^n\times\dbR^m$, it is continuous. Applying Jensen's inequality, we obtain 
$$
f(x^*,u^*) 
=\lim_{T\to\i}f\lt(\frac{1}{T}\int_0^T\bX_{\scT}^x(t)dt,\frac{1}{T}\int_0^T\bu_{\scT}^x(t)dt\rt)
\les\liminf_{T\to\i}\frac{1}{T}\int_0^T f(\bX_{\scT}^x(t),\bu_{\scT}^x(t)) dt. 
$$

{\it Step 2.} We claim that for any $(z,v)\in\sV_x$, we can find a bounded continuous function 
$\ti{u}:[0,\i)\to\dbR^m$ such that the solution $\ti{X}(\cd)$ of 
\begin{equation}\label{eq:tX}\lt\{\begin{aligned}
\dot X(t) &= AX(t) + B\ti{u}(t) +b, \q t\ges0, \\
     X(0) &= x 
\end{aligned}\rt.\end{equation}
is bounded and  
\begin{equation}\label{IntTp1}
\int_0^\i \[|\ti{X}(t)-z|^2 + |\ti{u}(t)-v|^2\]dt <\i. 
\end{equation}

To prove the claim, recall the matrices $P_1$ and $P_2$ defined in \rf{Def-P12}.
Fix $(z,v)\in\sV_x$ and define the feedback control
$$
\ti{u}(t) \deq FP_1^{\top}[\ti{X}(t)-z] + v, \q t\ges 0,
$$
where $F\in\dbR^{m\times k}$ is to be specified, and $\ti{X}(\cd)$ is the solution of \rf{eq:tX} 
corresponding to the above $\ti{u}(\cd)$. For $i=1,2$, set 
$$Y_i(t) \deq P_i^{\top}[\ti{X}(t)-z], \q t\ges 0.$$
Then, by the controllable decomposition \rf{Decomposition-AB} and the fact that 
$$
(z,v)\in\sV_x \q\Longrightarrow\q Az+ Bv + b=0,
$$
the pair $(Y_1(\cd),Y_2(\cd))$ satisfies
$$\lt\{\begin{aligned}
\dot Y_1(t) &= (A_{11}+B_1F)Y_1(t) + A_{12}Y_2(t), \q t\ges0, \\
\dot Y_2(t) &= A_{22}Y_2(t), \q t\ges0, \\
Y_1(0) & = P_1^{\top}(x-z), \q Y_2(0)=P_2^{\top}(x-z). 
\end{aligned}\rt.$$
Since $x\in\sX$ and 
$$
(z,v)\in\sV_x \q\Longrightarrow\q P_2^\top z+c=Q_2(P_2^\top x+c),
$$ 
it follows that   
$$
P_2^{\top}(x-z)=Q_1(P_2^{\top}x+c) + Q_2(P_2^{\top}x+c) - (P_2^{\top}z+c) = Q_1(P_2^{\top}x+c)\in\sG_1.
$$
Thus, there exist constants $K_1,\l>0$ such that 
$$
|Y_2(t)| = |e^{tA_{22}}Q_1(P_2^{\top}x+c)| \les K_1(1+|x|)e^{-\l t}, \q\forall t\ges 0. 
$$
Since the pair $(A_{11},B_1)$ is controllable, we may invoke the pole-placement theorem 
to choose an $F$ such that 
$$
|e^{t(A_{11}+B_1F)}|\les K_2 e^{-2\l t}, \q\forall t\ges 0 
$$
for some constants $K_2>0$. Consequently,  
\begin{align*}
|Y_1(t)| & \les |e^{t(A_{11}+B_1F)}P_1^{\top}(x-z)| + \int_0^t|e^{(t-s)(A_{11}+B_1F)}|\cd|A_{12}|\cd|Y_2(s)|ds \\
&\les K_2 e^{-2\l t}(|x|+|z|) + K_1K_2|A_{12}|e^{-2\l t}(1+|x|)\int_{0}^{t}e^{\l s}ds \\
&\les K_3(1+|x|+|z|) e^{-\l t}, \q\forall t\ges 0, 
\end{align*}
where $K_3\deq K_2+\frac{K_1K_2|A_{12}|}{\l}$. It then follows from 
$$
\ti{X}(t) = P_1Y_1(t) + P_2Y_2(t) + z, \q \ti{u}(t) = FP_1^{\top}[\ti{X}(t)-z] + v, 
$$ 
that both $\ti{X}(\cd)$ and $\ti{u}(\cd)$ are bounded and 
\begin{align*}
\int_0^\i\[|\ti{X}(t)-z|^2 + |\ti{u}(t)-v|^2\]dt 
&\les \bigl(1+|FP_1^{\top}|^2\bigr)\int_0^\i|\ti{X}(t)-z|^2]dt \\
&= \bigl(1+|FP_1^{\top}|^2\bigr)\int_0^\i\[|Y_1(t)|^2 + |Y_2(t)|^2\]dt \\
&<\i.
\end{align*}

{\it Step 3.}  We now prove that   
\begin{equation}\label{upper-bound}
\limsup_{T\to\i}\frac{1}{T}\int_0^Tf(\bX_{\scT}^x(t),\bu_{\scT}^x(t))dt \les f(z,v), \q\forall (z,v)\in\sV_{x}.
\end{equation} 
To this end, fix $(z,v)\in\sV_x$ and let $(\ti{X}(\cd),\ti{u}(\cd))$ be constructed as in Step 2.
Since $(\bX_{\scT}^x(\cd),\bu_{\scT}^x(\cd))$ is an optimal pair, we have  
$$
\int_0^Tf(\bX_{\scT}^x(t),\bu_{\scT}^x(t))dt \les \int_0^Tf(\ti{X}(t),\ti{u}(t))dt.
$$
Thus, to prove \rf{upper-bound}, it sufficient to show that 
\begin{equation}\label{X1u1-limit}
\lim_{T\to\i}\frac{1}{T}\int_0^Tf(\ti{X}(t),\ti{u}(t))dt=f(z,v).
\end{equation}
Since both $\ti{X}(\cd)$ and $\ti{u}(\cd)$ are bounded, we can choose a compact set 
$\Om\subseteq\dbR^{n}\times\dbR^{m}$ such that $(z,v)\in\Om$ and $(\ti{X}(t),\ti{u}(t))\in\Om$ 
for all $t\ges0$. Set 
$$ 
L\deq\max_{(\xi,\eta)\in\Om}|f(\xi,\eta)|.
$$
Since $f$ is uniformly continuous on $\Om$, for any $\e>0$, we can find a $\d>0$ such that for all $(\xi_1,\eta_1),(\xi_2,\eta_2)\in\Om$,
$$
|\xi_1-\xi_2|^2+|\eta_1-\eta_2|^2<\d \q \Rightarrow \q |f(\xi_1,\eta_1)-f(\xi_2,\eta_2)|<\e.
$$
Now define
$$
E_{\scT}^{\d}\deq \{t\in[0,T]: |\ti{X}(t)-z|^2 + |\ti{u}(t)-v|^2\ges \d\}.
$$
By Markov's inequality and \rf{IntTp1},  
$$
\mu(E_{\scT}^{\d})\les \frac{1}{\d}\int_0^T\[|\ti{X}(t)-z|^2 + |\ti{u}(t)-v|^2\]dt \les \frac{1}{\d}\int_0^\i\[|\ti{X}(t)-z|^2 + |\ti{u}(t)-v|^2\]dt,
$$
where $\mu$ denotes the Lebesgue measure on $\dbR$. Consequently, 
\begin{align*}
&\frac{1}{T}\int_0^T|f(\ti{X}(t),\ti{u}(t))-f(z,v)|dt \\
&\q= \frac{1}{T}\int_{[0,T]\setminus E_{\scT}^{\d}}|f(\ti{X}(t),\ti{u}(t))-f(z,v)|dt 
    +\frac{1}{T}\int_{E_{\scT}^{\d}}|f(\ti{X}(t),\ti{u}(t))-f(z,v)|dt \\
&\q\les \frac{\e}{T}\[T-\mu(E_{\scT}^{\d})\] + \frac{2L\mu(E_{\scT}^{\d})}{T}\to \e, \q\hb{as } T\to\i. 
\end{align*}
Sending $\e\to0$ completes the proof. 

\ms

{\it Step 4.} Combining \rf{lower-bound} and \rf{upper-bound}, we obtain \rf{eq:f*<f}. 
\end{proof}

\section{The exponential turnpike property}\label{Sec:PETP} 

In \autoref{Sec:NC-TP}, we showed that, for the integral turnpike property
\rf{Property:ITP} to hold at an initial state $x$, it is necessary that
conditions \ref{A1} and \rf{Sec3:NCx0} be satisfied. 
In \autoref{sec:steady-pair}, we further established that, for any initial
state $x$ satisfying \rf{Sec3:NCx0}, the associated steady pair $(x^*,u^*)$
solves the constrained optimization Problem (O) corresponding to this initial
state $x$.

\ms 

In this section, we show that, under suitable assumptions on the stage cost
function $f$, both Problem (LC)$_{\scT}$ and Problem (O) admit unique solutions. 
Moreover, for any feasible initial state $x\in\sX$ (recalling the definition 
\rf{def:feasible-x} of $\sX$), we establish the exponential 
turnpike property \rf{Property:ETP} and show that the optimal value $V_{\scT}(x)$  
of Problem (LC)$_{\scT}$ and the minimum value $V^*(x)$ of Problem (O) satisfy 
the following estimate:   
\begin{equation}\label{VT-V*}
\frac{1}{T}V_{\scT}(x)-V^*(x) = O\(\frac{1}{T}\),
\end{equation} 
which characterizes the convergence rate of the averaged finite-horizon optimal
cost toward the steady optimal value.   

\ms

We impose the following assumptions on the stage cost function $f$.
\begin{taggedassumption}{(A2)}\label{A2} 
The stage cost function $f\in C^2(\dbR^n\times\dbR^m)$ is strongly convex, 
that is, there exists a constant $\d>0$ such that
$$
\nabla^2 f(x,u)\ges \d I_{n+m}, \q\forall (x,u)\in\dbR^n\times\dbR^m. 
$$
\end{taggedassumption}

The following result establish the unique solvability of Problem (LC)$_{\scT}$.

\begin{theorem}\label{thm:LC-solvable}
Let {\rm\ref{A2}} hold. Then for any $T>0$, Problem (LC)$_{\scT}$ admits a unique optimal 
control in $\sU[0,T]\deq L^2(0,T;\dbR^m)$ for every initial state $x$. 
\end{theorem}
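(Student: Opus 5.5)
The plan is to apply the direct method in the Hilbert space $\sU[0,T]=L^2(0,T;\dbR^m)$. The map $u(\cd)\mapsto J_{\scT}(x;u(\cd))$ will be shown to be strongly convex, continuous and coercive; existence then follows by weak lower semicontinuity, and uniqueness by strict convexity.

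\emph{Step 1 (affine control-to-state map).} By variation of constants,
$$
X(t)=e^{tA}x+\int_0^te^{(t-s)A}b\,ds+(\mathcal{L}u)(t),\qquad (\mathcal{L}u)(t)\deq\int_0^te^{(t-s)A}Bu(s)\,ds .
$$
The operator $\mathcal{L}:L^2(0,T;\dbR^m)\to L^2(0,T;\dbR^n)$ is bounded and linear. Hence $u\mapsto(X,u)$ is affine and continuous from $\sU[0,T]$ into $L^2(0,T;\dbR^{n+m})$, and it is injective in the $u$-component.

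\emph{Step 2 (strong convexity and growth of $f$).} Under \ref{A2}, Taylor's formula with integral remainder gives, for all $w,w_0\in\dbR^{n+m}$,
$$
f(w)\ges f(w_0)+\lan\nabla f(w_0),w-w_0\ran+\tfrac{\d}{2}|w-w_0|^2 .
$$
Equivalently, $f-\tfrac{\d}{2}|\cd|^2$ is convex. Two consequences follow. First, taking $w_0=0$ gives the quadratic lower bound $f(w)\ges f(0)-|\nabla f(0)||w|+\tfrac\d2|w|^2$. Second, $\theta f(w_1)+(1-\theta)f(w_2)-f(\theta w_1+(1-\theta)w_2)\ges\tfrac\d2\theta(1-\theta)|w_1-w_2|^2$.

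\emph{Step 3 (well-definedness and continuity of $J_{\scT}$). This is the main obstacle.} Assumption \ref{A2} gives no upper growth bound on $f$, since $\nabla^2 f$ may be unbounded. So for $u\in L^2$ the integral $J_{\scT}(x;u)$ may equal $+\i$, and $J_{\scT}$ need not be continuous on $\sU[0,T]$. I will handle this by working with the extended-valued functional $J_{\scT}:\sU[0,T]\to(-\i,+\i]$.

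The lower bound from Step 2 shows that the negative part of $f(X,u)$ is integrable, so $J_{\scT}$ is well defined. It is also finite somewhere: for $u\equiv0$, $X$ is continuous, hence bounded, and $f(X,0)$ is integrable. Convexity of $J_{\scT}$ follows from convexity of $f$ and the affine structure in Step 1. Strong lower semicontinuity follows from Fatou's lemma: if $u_j\to u$ in $L^2$, then $X_j\to X$ uniformly; passing to an a.e.\ convergent subsequence and applying Fatou to $f(X_j,u_j)-\big(f(0)-|\nabla f(0)||(X_j,u_j)|\big)\ges0$ gives the claim, since the subtracted terms converge in $L^1$. A convex, strongly lower semicontinuous functional is weakly lower semicontinuous, by Mazur's lemma.

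\emph{Step 4 (coercivity, existence, uniqueness).} From the quadratic lower bound,
$$
J_{\scT}(x;u)\ges -C_T+\tfrac\d2\|u\|_{L^2}^2-C_T\|u\|_{L^2},
$$
using $\|X\|_{L^2}\les C_T(1+|x|+\|u\|_{L^2})$. Hence a minimizing sequence is bounded in $L^2$, and $V_{\scT}(x)<\i$ by Step 3. I will extract a weakly convergent subsequence; weak lower semicontinuity then shows that the weak limit is optimal.

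For uniqueness, suppose $u_1\neq u_2$ are both optimal. The strong convexity inequality of Step 2 with $\theta=1/2$, integrated over $[0,T]$, gives
$$
J_{\scT}\big(x;\tfrac{u_1+u_2}{2}\big)\les V_{\scT}(x)-\tfrac\d8\|u_1-u_2\|_{L^2}^2<V_{\scT}(x),
$$
which is a contradiction. Here I use that the state corresponding to $\tfrac{u_1+u_2}{2}$ is the average of the two states, by Step 1.
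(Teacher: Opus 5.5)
Your proposal is correct and follows essentially the same route as the paper: a direct method in which a minimizing sequence is bounded in $L^2$ via the quadratic lower bound from (A2), a weak limit is extracted, and Mazur's lemma combined with Fatou's lemma and convexity of $f$ shows the limit is optimal, with uniqueness from strict convexity. The differences are only in presentation: you package the Mazur--Fatou step as weak lower semicontinuity of a convex, strongly lower semicontinuous extended-valued functional (the paper applies Mazur directly to the minimizing sequence). You also make explicit two points the paper leaves implicit, namely that $J_T$ may take the value $+\infty$ and the midpoint strong-convexity computation behind uniqueness.
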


\begin{proof}
By Taylor's formula,
\begin{equation}\label{Taylor-f}
f(x,u) \ges f(0,0) + f_x(0,0)^\top x + f_u(0,0)^\top u + \frac{\d}{2}\bigl(|x|^2+|u|^2\bigr), 
\q\forall (x,u)\in\dbR^n\times\dbR^m.  
\end{equation}
Using the elementary inequality $a^\top y\ges -\frac{1}{2\d}|a|^2-\frac{\d}{2}|y|^2$, we obtain
\begin{equation}\label{L-bound:f}
f(x,u) \ges f(0,0)-\frac{1}{2\d}\,|\nabla f(0,0)|^2, \q\forall (x,u)\in\dbR^n\times\dbR^m. 
\end{equation}
Hence $f$ is bounded below on $\dbR^n\times\dbR^m$. 
Without loss of generality, define
$$
\ti{f}(x,u)\deq f(x,u)-f(0,0)+\frac{1}{2\delta}\,|\nabla f(0,0)|^2 .
$$
Then $\ti{f}\ges0$. Since this only shifts the objective by a constant, we may
assume $f\ges0$.

\ms 

Let $\{u_k(\cd)\}\subseteq\sU[0,T]$ be a minimizing sequence such that 
\begin{equation}\label{Minimizing:uk}
\lim_{k\to\i}J_{\scT}(x;u_k(\cd)) = \inf_{u(\cd)\in\sU[0,T]} J_{\scT}(x;u(\cd)), 
\end{equation}
and let $X_k(\cd)$ denote the state trajectory corresponding to $u_k(\cd)$. 
By \rf{Taylor-f} and $f\ges0$, 
$$
f(x,u)\ges f_x(0,0)^\top x + f_u(0,0)^\top u + \frac{\d}{2}\bigl(|x|^2+|u|^2\bigr)
\ges {\d\over4}\bigl(|x|^2+|u|^2\bigr)-{1\over\d}|\nabla f(0,0)|^2.
$$
It then follows that
\begin{align*}
{\d\over4}\int_0^T|u_k(t)|^2dt \les {T\over\d}|\nabla f(0,0)|^2 + \int_0^T f(X_k(t),u_k(t))dt,
\end{align*}
which, together with \rf{Minimizing:uk}, implies that $\{u_k(\cd)\}$ is bounded 
in the Hilbert space $\sU[0,T]$. 
Hence, by weak compactness, it admits a weakly convergent subsequence. 
Without loss of generality, we may assume that $\{u_k(\cd)\}$ itself converges weakly 
to $\bu(\cd)\in\sU[0,T]$. 
By Mazur's theorem there exist $\a_{kj}\in[0,1]$, $j=1,2,...,N_k$, such that
\begin{equation*} 
\sum_{j=1}^{N_k}\a_{kj}=1, \q \lim_{k\to\i}\int_0^T\bigg|\sum_{j=1}^{N_k}\a_{kj}u_{k+j}(t)-\bu(t)\bigg|^2dt=0.
\end{equation*}
Define $\ti{u}_k(\cd)\deq\sum_{j=1}^{N_k}\a_{kj}u_{k+j}(\cd)$ and let $\ti{X}_k(\cd)$ 
be the corresponding state trajectory. Then 
$$
\sup_{0\les t\les T}|\ti{X}_k(t)-\bX(t)|\to0 \q\text{as } k\to\i,  
$$
where $\bX(\cd)$ denotes the state trajectory corresponding to $\bu(\cd)$. Consequently, 
$$
(\ti{X}_k(t),\ti{u}_k(t)) \to (\bX(t),\bu(t)) \q\ae~ t\in[0,T].
$$
Since $f\ges0$, it follows from Fatou's lemma and the convexity of $f$ that 
\begin{align*}
J_{\scT}(x;\bu(\cd)) 
&=\int_0^T f(\bX(t),\bu(t))dt \les \liminf_{k\to\i}\int_0^T f(\ti{X}_k(t),\ti{u}_k(t))dt \\
&\les \liminf_{k\to\i} \sum_{j=1}^{N_k}\a_{kj}\int_0^Tf(X_{k+j}(t),u_{k+j}(t))dt 
=\inf_{u(\cd)\in\sU[0,T]} J_{\scT}(x;u(\cd)). 
\end{align*}
This shows that $\bu(\cd)$ is an optimal control for the initial state $x$. 
The uniqueness follows from the strong (hence strict) convexity of $f$.   
\end{proof}

Recall the matrices $P_i$ and $Q_i$ ($i=1,2$) introduced in \rf{Def-P12} and \rf{def:Qi}, respectively.
The following result establishes the unique solvability of Problem (O).

\begin{theorem}\label{thm:O-solvable}
Let {\rm\ref{A1}--\ref{A2}} hold. Then for each $x\in\sX$, Problem (O) admits a unique solution. 
Moreover, $(x^*,u^*)$ is the solution of Problem (O) corresponding to $x$ if and only if 
\begin{equation}\label{Prob-O-xu}
\lt\{\begin{aligned}
&f_{x}(x^*,u^*) + A^{\top}\l_1^* + P_2\l_2^*=0,\\
&f_{u}(x^*,u^*) + B^{\top}\l_1^*=0,\\
&Ax^* + B u^* + b=0, \\
& P_2^{\top}x^* + c = Q_2(P_2^{\top}x + c),
\end{aligned}\rt.
\end{equation}
for some $\l_1^*\in\dbR^{n}$ and $\l_2^*\in\dbR^{n-k}$. 
\end{theorem}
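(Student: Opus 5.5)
The plan is to treat Problem (O) as a strongly convex minimization over an affine subspace of $\dbR^n\times\dbR^m$ and then apply standard Lagrange multiplier theory for linear equality constraints.

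\emph{Nonemptiness and closedness of $\sV_x$.} The set $\sV_x$ is closed, since it is the solution set of finitely many linear equations in $(z,u)$. To show it is nonempty, fix $x\in\sX$ and set $y_2\deq Q_2(P_2^\top x+c)\in\ker A_{22}$. Write $z=P_1z_1+P_2z_2$ and choose $z_2\deq y_2-c$. By \rf{Decomposition-AB}, the second block row of $Az+Bu+b=0$ reads $A_{22}z_2+P_2^\top b=A_{22}y_2-A_{22}c+A_{22}c=0$, so it holds automatically. The first block row reads $A_{11}z_1+B_1u=-A_{12}z_2-P_1^\top b$. This is solvable because $\im(A_{11},B_1)=\dbR^k$: indeed, if $w^\top A_{11}=0$ and $w^\top B_1=0$, then $w^\top A_{11}^jB_1=0$ for all $j$, and controllability forces $w=0$. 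Hence $\sV_x$ is a nonempty affine subspace.

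\emph{Existence and uniqueness.} By \rf{Taylor-f}, $f$ is coercive, that is, $f(z,u)\to\i$ as $|(z,u)|\to\i$, and it is continuous. Therefore $f$ attains its infimum on the nonempty closed set $\sV_x$: a minimizing sequence is bounded, and we extract a convergent subsequence. Uniqueness follows from strict convexity of $f$ together with convexity of $\sV_x$. If two minimizers were distinct, their midpoint would lie in $\sV_x$ and give a strictly smaller value.

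\emph{Characterization via \rf{Prob-O-xu}.} Write the constraints as $\mathcal{L}(z,u)=d$, with the linear map $\mathcal{L}(z,u)\deq(Az+Bu,\,P_2^\top z)$ and $d\deq(-b,\,Q_2(P_2^\top x+c)-c)$.
\begin{enumerate}[(i)]
\item Since $f$ is convex and differentiable and the feasible set is affine, $(x^*,u^*)\in\sV_x$ is a minimizer if and only if $\nabla f(x^*,u^*)$ is orthogonal to $\ker\mathcal{L}$.
\item This orthogonality holds if and only if $\nabla f(x^*,u^*)\in\im\mathcal{L}^\top$, because in finite dimensions $(\ker\mathcal{L})^\perp=\im\mathcal{L}^\top$.
\item Computing $\mathcal{L}^\top(\mu_1,\mu_2)=(A^\top\mu_1+P_2\mu_2,\;B^\top\mu_1)$ and setting $\l_i^*\deq-\mu_i$ gives exactly the first two equations of \rf{Prob-O-xu}. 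The last two equations of \rf{Prob-O-xu} are just membership in $\sV_x$.
\end{enumerate}
Sufficiency can also be checked directly. Convexity gives $f(z,u)\ges f(x^*,u^*)+\lan\nabla f(x^*,u^*),(z-x^*,u-u^*)\ran$. For $(z,u)\in\sV_x$, substituting the multiplier equations turns the inner product into $-\lan\l_1^*,A(z-x^*)+B(u-u^*)\ran-\lan\l_2^*,P_2^\top(z-x^*)\ran$, and both terms vanish.

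The only step requiring care is nonemptiness of $\sV_x$, which relies on \ref{A1} and $y_2\in\ker A_{22}$. The rest is routine. Note that the multipliers are not claimed to be unique, which is consistent with the statement's ``for some''.
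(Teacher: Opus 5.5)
Your proof is correct, but it takes a different route from the paper's. The uniqueness step and the direct sufficiency check are the same in both. The difference is in existence and necessity.

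The paper never proves existence of a minimizer directly. It premultiplies the KKT system by $P^\top$ and reduces it to a square nonlinear system $g(y_1,u,\nu)=d$ in $\dbR^{2k+m}$. It then shows that the saddle-point Jacobian $\bigl(\begin{smallmatrix} Q & D^\top\\ D & 0\end{smallmatrix}\bigr)$, with $D=(A_{11},B_1)$, is everywhere invertible and that $g$ is norm-coercive, and invokes Hadamard's global inverse function theorem. Existence of the minimizer and the ``only if'' direction then follow from sufficiency plus uniqueness.

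You instead get the minimizer by the direct method: a coercive continuous $f$ on a nonempty closed affine set. You then read off the multipliers from $(\ker\mathcal{L})^\perp=\im\mathcal{L}^\top$, which needs no constraint qualification because the constraints are linear.

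Both arguments use controllability of $(A_{11},B_1)$ only through the full row rank of $D$. You use it to show $\sV_x\neq\emptyset$, a point the paper leaves implicit. The paper uses it for Jacobian invertibility and for coercivity of $g$ in $\nu$.

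Your route is shorter and avoids a global inverse function theorem. The paper's route produces, along the way, the reduced block form of the optimality system, which it reuses in the proof of the control turnpike estimate. It also shows that $P_1^\top\l_1^*$ is unique. Both are easily recovered from your version by premultiplying by $P^\top$ and using injectivity of $D^\top$.
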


\begin{proof}
Fix an $x\in\sX$. Since $\sV_{x}$ is a convex set and $f$ is strongly (hence strictly) convex,
the solution (if it exists) of Problem (O) corresponding to $x$ is unique. 

\ms 

Next, we show that if \rf{Prob-O-xu} holds for some $\l_1^*\in\dbR^n$ 
and $\l_2^*\in\dbR^{n-k}$, then $(x^*,u^*)$ is the unique solution to Problem (O) for this $x$.
Indeed, by Taylor's formula and \ref{A2}, we have
\begin{align*}
f(z,u)-f(x^*,u^*) 
&\ges \lan f_x(x^*,u^*), z-x^*\ran + \lan f_{u}(x^*,u^*),u-u^*\ran, \q\forall (z,u)\in\sV_{x}.
\end{align*}
On the other hand, \eqref{Prob-O-xu} implies 
\begin{align*}
&\lan f_x(x^*,u^*), z-x^*\ran + \lan f_{u}(x^*,u^*),u-u^*\ran \\
&\q= -\lan A^{\top}\l_1^* + P_2\l_2^*, z-x^* \ran - \lan B^{\top}\l_1^*, u-u^*\ran\\
&\q= -\lan \l_1^*, A(z-x^*)+B(u-u^*)\ran -\lan \l_2^*, P_2^{\top}(z-x^*)\ran \\
&\q= 0, \q\forall (z,u)\in\sV_{x},
\end{align*}
where in the last step, we have used the fact that 
$$
(x^*,u^*),(z,u)\in\sV_{x} \q\Longrightarrow\q  Az+Bu=Ax^*+Bu^*=-b, ~P_2^{\top}z=P_2^{\top}x^*=-c.
$$
Consequently, $ f(x^*,u^*)\les f(z,u)$ for all $(z,u)\in\sV_{x}$.

\ms

Now we show that equation \rf{Prob-O-xu} admits a solution $(x^*,u^*,\l_1^*,\l_2^*)$.  
Recall the notation introduced in \rf{Controllable--set}--\rf{Decomposition-AB}.
For $i=1,2$, let $\l_{1i}^*\deq P_i^{\top}\l_1^*$ and $y_i^*\deq P_i^{\top}x^*$. Then
$$
x^* = P_1y_1^*+P_2y_2^*.
$$
Premultiplying the first equation in \eqref{Prob-O-xu} by $P^{\top}$, we obtain
\begin{gather*}
P_1^{\top}f_x(x^*,u^*) + A_{11}^{\top}\l_{11}^* =0, \\
P_2^{\top}f_x(x^*,u^*) + A_{12}^{\top}\l_{11}^* + A_{22}^{\top}\l_{12}^* + \l_2^*=0.
\end{gather*}
The second equation in \eqref{Prob-O-xu} becomes
$$
0 = f_{u}(x^*,u^*) + B^{\top}PP^{\top}\l_1^* =f_{u}(x^*,u^*) + B_1^{\top}\l_{11}^*.
$$
Premultiplying the third equation in \eqref{Prob-O-xu} by $P^{\top}$, we obtain
$$
A_{11}y_1^* + A_{12}y_2^* + B_1 u^* + P_1^{\top}b=0, \q  A_{22}y_2^* + P_2^{\top}b =0. 
$$
Since $P^{\top}$ is invertible, we see that \rf{Prob-O-xu} is equivalent to the following: 
\begin{equation}\label{26-1-14:xu_star}\lt\{\begin{aligned}
&P_1^{\top}f_x(P_1y_1^* + P_2y_2^*,u^*) + A_{11}^{\top}\l_{11}^* =0, \\
&f_u(P_1y_1^* + P_2y_2^*,u^*) + B_1^{\top}\l_{11}^*=0, \\
&A_{11}y_1^* + B_1u^* + P_1^{\top}b + A_{12}y_2^*=0,\\
&\l_2^*=-[P_2^{\top}f_x(P_1y_1^* + P_2y_2^*,u^*) + A_{12}^{\top}\l_{11}^* + A_{22}^{\top}\l_{12}^*],\\
&y_2^* = Q_2(P_2^{\top}x+c)-c.
\end{aligned}\rt.\end{equation}
Since for any $y_1^*$, $y_2^*$, $u^*$, and $\l_{11}^*$, one can always choose $\l_2^*$ and $\l_{12}^*$ 
so that the fourth equation is satisfied, and since $y_2^*$ is uniquely determined by the last equality, 
it suffices to verify the existence of the triple $(y_1^*,u^*,\l_{11}^*)$ satisfying the first three equations. 
To this end, let     
\begin{align*}
g(z,u,\nu) \deq 
\bp P_1^{\top}f_{x}(P_1 z + P_2y_2^*,u) + A_{11}^{\top}\nu \\ 
    f_{u}(P_1 z + P_2y_2^*,u) + B_1^{\top}\nu \\
    A_{11}z + B_1 u
\ep, \q 
d\deq \bp 0 \\ 0 \\ -P_1^{\top}b-A_{12}y_2^*\ep.
\end{align*}
By assumption \ref{A2}, the mapping $g:\dbR^{m+2k}\to\dbR^{m+2k}$ is continuously differentiable, 
with Jacobian matrix 
$$
H(z,u,\nu) \deq \bp Q(z,u) & D^{\top} \\ D & 0 \ep,
$$
where $Q(z,u)\deq\diag\!(P_1^\top,I_m)\nabla^2f(P_1z+P_2y_2^*,u)\diag\!(P_1,I_m)$ and $D\deq(A_{11},B_1)$.
By \ref{A2},
$$
Q(z,u)\ges \d\diag(P_1^\top,I_m)\diag(P_1,I_m)=\d I_{k+m}.
$$
Thus, for any $(\xi,\eta)\in\ker\!H(z,u,\nu)$, we have
$$
Q(z,u)\xi + D^{\top}\eta=0, ~D\xi=0 \q\Longrightarrow\q  
DQ(z,u)^{-1}D^{\top}\eta=0          \q\Longrightarrow\q 
D^{\top}\eta=0.
$$
Since the pair $(A_{11},B_1)$ is controllable, the Hautus test implies that $\eta=0$. 
Consequently, $\xi=0$ as well.
Therefore, the Jacobian matrix $H(z,u,\nu)$ is invertible for all $(z,u,\nu)$. 
If we can prove 
\begin{equation}\label{g:coercive}
|(z,u,\nu)|\to\i \q\Longrightarrow\q |g(z,u,\nu)|\to\i,
\end{equation}
then, by Hadamard’s global inverse function theorem, there exists a unique triple $(y_1^*,u^*,\l_{11}^*)$ 
such that $g(y_1^*,u^*,\l_{11}^*)=d$. To verify \rf{g:coercive}, set $\xi\deq(z,u)$ and define
$$
\F(z,u) \deq \bp P_1^{\top}f_x(P_1 z+P_2 y_2^*,u) \\ f_u(P_1 z+P_2 y_2^*,u)\ep, \q
\bar Q(z,u) \deq \int_0^1 Q(tz,tu)dt \ges \d I_{k+m}.
$$
By Taylor's formula with integral remainder applied at $(0,0)$, we have
$$
\F(z,u)-\F(0,0) = \int_0^1 Q(tz,tu)\xi dt \equiv \bar Q(z,u)\xi.
$$
Taking the inner product of $g(z,u,\nu)$ with $\bigl(\begin{smallmatrix}\xi\\-\nu\end{smallmatrix}\bigr)$
and recalling $D\deq(A_{11},B_1)$, we obtain
\begin{align*}
|g(z,u,\nu)|\sqrt{|\xi|^2+|\nu|^2}
&\ges\Blan g(z,u,\nu),\Bigl(\begin{smallmatrix}\xi\\[0.6mm] -\nu\end{smallmatrix}\Bigr)\Bran 
=\Blan\Bigl(\begin{smallmatrix}\F(z,u)+D^{\top}\nu\\[0.6mm] D\xi\end{smallmatrix}\Bigr),
      \Bigl(\begin{smallmatrix}\xi\\[0.6mm] -\nu\end{smallmatrix}\Bigr)\Bran 
=\lan\F(z,u),\xi\ran \\
&= \lan\F(0,0),\xi\ran + \lan\bar Q(z,u)\xi,\xi\ran \ges \d|\xi|^2 - |\F(0,0)|\cd|\xi|,
\end{align*}
which implies
$$
|g(z,u,\nu)|\ges\frac{\d|\xi|^2-|\F(0,0)|\cd|\xi|}{\sqrt{|\xi|^2+|\nu|^2}} \to \i, \q\text{as }|\xi|\to\i.
$$
Next, suppose that $|\xi|$ is bounded while $|\nu|\to\i$. 
Since the pair $(A_{11},B_1)$ is controllable, we have $\ker\!D^{\top}=\{0\}$. 
Hence, there exists a constant $K>0$ such that
$$
|D^{\top}\nu|\ges K|\nu|, \q\forall \nu\in\dbR^k.
$$
Because $\F$ is continuous, $\F(z,u)$ remains bounded when $\xi$ is bounded. Therefore,  
$$
|g(z,u,\nu)|\ges|\F(z,u)+D^{\top}\nu|\ges|D^{\top}\nu|-|\F(z,u)|\to\i, \q\text{as }|\nu|\to\i.  
$$
Combining the above two cases, we obtain \rf{g:coercive}. 
\end{proof}

Next, we derive the estimate \rf{VT-V*}. In preparation, we present the following lemma.

\begin{lemma}\label{lmm:Lower-bound}
Let {\rm\ref{A1}--\ref{A2}} hold. Fix any $x\in\sX$, and let $(x^*,u^*,\l_1^*,\l_2^*)$ 
be a solution of equation \rf{Prob-O-xu}. 
Let $u(\cd)\in\sU[0,T]$ and let $X(\cd)$ be the corresponding state trajectory 
with the initial state $x$. Set
$$
\l_{1i}^*\deq P_i^{\top}\l_1^*, \q y_i^*\deq P_i^{\top}x^*, \q Y_i(t)\deq P_i^{\top}X(t), \q i=1,2.
$$
Then for any $0\les t_1<t_2\les T$,
\begin{align*}
&\int_{t_1}^{t_2}\[f(X(s),u(s))-V^*(x)+\lan A_{22}^\top\l_{12}^*+\l_2^*,Y_2(s)-y_2^*\ran\]ds \\
&\q=\lan\l_{11}^*,Y_1(t_1)-Y_1(t_2)\ran +\int_{t_1}^{t_2}\Blan\varPi(s)
    \Bigl(\begin{smallmatrix}X(s)-x^*\\[0.6mm] u(s)-u^*\end{smallmatrix}\Bigr), 
    \Bigl(\begin{smallmatrix}X(s)-x^*\\[0.6mm] u(s)-u^*\end{smallmatrix}\Bigr)\Bran ds, 
\end{align*}
where   
\begin{align}\label{def:varPi}
\varPi(s)\deq\int_0^1(1-\th)\nabla^2f\big((1-\th)x^*+\th X(s),(1-\th)u^*+\th u(s)\big)d\th
         \ges\frac{\d}{2}I_{n+m}.   
\end{align}
\end{lemma}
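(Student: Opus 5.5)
The identity will follow from Taylor's formula with integral remainder for $f$ about the steady pair $(x^*,u^*)$, combined with the first-order optimality system \rf{Prob-O-xu} and the block form of the state equation.

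\emph{Step 1: Taylor expansion.} Since $V^*(x)=f(x^*,u^*)$ by \autoref{thm:O-solvable}, the second-order Taylor formula with integral remainder gives, for a.e.\ $s$,
\begin{align*}
f(X(s),u(s))-V^*(x) = \lan f_x(x^*,u^*),X(s)-x^*\ran + \lan f_u(x^*,u^*),u(s)-u^*\ran
+ \Blan\varPi(s)\Bigl(\begin{smallmatrix}X(s)-x^*\\[0.6mm] u(s)-u^*\end{smallmatrix}\Bigr),\Bigl(\begin{smallmatrix}X(s)-x^*\\[0.6mm] u(s)-u^*\end{smallmatrix}\Bigr)\Bran,
\end{align*}
with $\varPi$ as in \rf{def:varPi}. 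The bound $\varPi(s)\ges\frac{\d}{2}I_{n+m}$ is immediate from \ref{A2}, because $\int_0^1(1-\th)d\th=\frac12$.

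\emph{Step 2: the linear terms.} Use \rf{Prob-O-xu} to replace $f_x(x^*,u^*)$ by $-A^\top\l_1^*-P_2\l_2^*$ and $f_u(x^*,u^*)$ by $-B^\top\l_1^*$. The linear part then becomes
$$
-\lan\l_1^*,A(X-x^*)+B(u-u^*)\ran-\lan\l_2^*,Y_2-y_2^*\ran.
$$
Since $Ax^*+Bu^*+b=0$, we have $A(X-x^*)+B(u-u^*)=\dot X$, so this equals $-\lan\l_1^*,\dot X\ran-\lan\l_2^*,Y_2-y_2^*\ran$.

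\emph{Step 3: the block decomposition.} Write $\lan\l_1^*,\dot X\ran=\lan\l_{11}^*,\dot Y_1\ran+\lan\l_{12}^*,\dot Y_2\ran$, using $PP^\top=I$. From \rf{Split-state} and $A_{22}y_2^*+P_2^\top b=0$ (the second block row of $Ax^*+Bu^*+b=0$), we get $\dot Y_2=A_{22}(Y_2-y_2^*)$. Hence
$$
\lan\l_{12}^*,\dot Y_2\ran=\lan A_{22}^\top\l_{12}^*,Y_2-y_2^*\ran.
$$
Moving this term and the $\l_2^*$ term to the left-hand side produces exactly the bracket $\lan A_{22}^\top\l_{12}^*+\l_2^*,Y_2-y_2^*\ran$ in the statement.

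\emph{Step 4: integration.} Integrate over $[t_1,t_2]$. The remaining term $-\int_{t_1}^{t_2}\lan\l_{11}^*,\dot Y_1\ran ds$ equals $\lan\l_{11}^*,Y_1(t_1)-Y_1(t_2)\ran$, because $Y_1$ is absolutely continuous.

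The argument is purely computational. The only point needing care is the bookkeeping of the $\l_{12}^*$ component: the $Y_2$-part of $\lan\l_1^*,\dot X\ran$ must be converted, through the uncontrollable dynamics, into an undifferentiated term rather than a boundary term.
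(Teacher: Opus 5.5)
Your proof is correct and follows essentially the paper's argument. Both use Taylor's formula with integral remainder about $(x^*,u^*)$, substitute the optimality system \rf{Prob-O-xu}, and use the block decomposition \rf{Decomposition-AB}, so that only the $Y_1$-dynamics becomes a boundary term. The difference is only the order of the algebra: you first recognize $A(X-x^*)+B(u-u^*)=\dot X$ and then split $\langle \l_1^*,\dot X\rangle$ into its $Y_1$ and $Y_2$ parts, whereas the paper first splits $A^\top\l_1^*+P_2\l_2^*$ blockwise and then identifies $A_{11}(Y_1-y_1^*)+A_{12}(Y_2-y_2^*)+B_1(u-u^*)$ as $\dot Y_1$. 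The two computations are equivalent.
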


\begin{proof}
By the second-order Taylor formula with integral remainder, we have 
\begin{align}\label{second-Taylor}
f(X(s),u(s))-f(x^*,u^*) 
&= \lan f_{x}(x^*,u^*), X(s)-x^*\ran + \lan f_u(x^*,u^*), u(s)-u^*\ran \nn\\
&\hp{=\ } + \Blan\varPi(s)\Bigl(\begin{smallmatrix}X(s)-x^*\\[0.6mm] u(s)-u^*\end{smallmatrix}\Bigr), 
                          \Bigl(\begin{smallmatrix}X(s)-x^*\\[0.6mm] u(s)-u^*\end{smallmatrix}\Bigr)\Bran.  
\end{align}
Since $P=(P_1,P_2)$ is orthogonal, using \rf{Prob-O-xu} and \rf{Decomposition-AB} we obtain 
\begin{align*}
\lan f_x(x^*,u^*),X(s)-x^*\ran 
=& -\lan A^{\top}\l_1^*+P_2\l_2^*, X(s)-x^*\ran \\
=& -\lan P^\top(A^{\top}PP^\top\l_1^*+P_2\l_2^*),P^\top[X(s)-x^*]\ran \\
=& -\lan A_{11}^\top\l_{11}^*,Y_1(s)-y_1^*\ran 
   -\lan A_{12}^\top\l_{11}^*+A_{22}^\top\l_{12}^*+\l_2^*,Y_2(s)-y_2^*\ran\\
=& -\lan \l_{11}^*, A_{11}[Y_1(s)-y_1^*]+A_{12}[Y_2(s)-y_2^*]\ran \\
 & -\lan A_{22}^\top\l_{12}^*+\l_2^*,Y_2(s)-y_2^*\ran, \\[1mm]
\lan f_u(x^*,u^*),u(s)-u^*\ran 
=& -\lan \l_1^*, B[u(s)-u^*]\ran = -\lan P^{\top}\l_1^*, P^{\top}B[u(s)-u^*]\ran \\
=& -\lan \l_{11}^*, B_1[u(s)-u^*]\ran.  
\end{align*}
Combining the above equalities and noting that
\begin{equation}\label{Y1-Y2}\lt\{\begin{aligned}
{d\over ds}[Y_1(s)-y_1^*] &= A_{11}[Y_1(s)-y_1^*]+A_{12}[Y_2(s)-y_2^*] + B_1[u(s)-u^*],  \\
{d\over ds}[Y_2(s)-y_2^*] &= A_{22}[Y_2(s)-y_2^*],   
\end{aligned}\rt.\end{equation}
we obtain   
\begin{align*}
&\int_{t_1}^{t_2}\[\lan f_{x}(x^*,u^*), X(s)-x^*\ran + \lan f_u(x^*,u^*), u(s)-u^*\ran\]ds\\
&\q=\lan\l_{11}^*,Y_1(t_1)-Y_1(t_2)\ran-\int_{t_1}^{t_2}\lan A_{22}^\top\l_{12}^*+\l_2^*,Y_2(s)-y_2^*\ran ds, 
\end{align*}
which, together with \rf{second-Taylor}, yields the desired result.   
\end{proof}

We now establish the estimate \rf{VT-V*}. 

\begin{theorem}\label{thm:Value-function}
Let {\rm\ref{A1}--\ref{A2}} hold. Then for any $x\in\sX$, there exists a constant $K>0$, independent of $T$, such that 
\begin{equation}\label{Value-bound}
|V_{\scT}(x)-TV^*(x)|\les K,\q\forall T>0.
\end{equation}
\end{theorem}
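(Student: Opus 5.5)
We prove two one-sided bounds, an upper bound $V_{\scT}(x)-TV^*(x)\les K$ and a lower bound $V_{\scT}(x)-TV^*(x)\ges -K$. Throughout, fix $x\in\sX$ and let $(x^*,u^*,\l_1^*,\l_2^*)$ solve \rf{Prob-O-xu}; such a solution exists by \autoref{thm:O-solvable}, and $V^*(x)=f(x^*,u^*)$.

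\emph{Lower bound.} Let $(\bX_{\scT},\bu_{\scT})$ be the optimal pair from \autoref{thm:LC-solvable}, and apply \autoref{lmm:Lower-bound} on $[0,T]$. Since $\varPi\ges0$, this gives
$$
V_{\scT}(x)-TV^*(x)\ges \lan\l_{11}^*,Y_1(0)-Y_1(T)\ran-\int_0^T\lan A_{22}^\top\l_{12}^*+\l_2^*,Y_2(s)-y_2^*\ran ds .
$$
We handle the two terms separately.

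First, the $Y_2$ term. By \rf{Y1-Y2}, $Y_2(s)-y_2^*=e^{sA_{22}}(P_2^\top x-y_2^*)$. From the last line of \rf{Prob-O-xu} we have $P_2^\top x-y_2^*=Q_1(P_2^\top x+c)\in\sG_1$, and this quantity does not depend on $T$ or on the control. Hence $|Y_2(s)-y_2^*|\les K_1e^{-\l s}$, so the integral is bounded uniformly in $T$.

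Second, the boundary term $\lan\l_{11}^*,Y_1(T)\ran$, which is the real obstacle: we need a $T$-independent bound on $|\bX_{\scT}(T)|$. We use \autoref{lmm:Lower-bound} together with $\varPi\ges\frac{\d}{2}I$. Writing $\Phi\deq\int_0^T(|\bX_{\scT}-x^*|^2+|\bu_{\scT}-u^*|^2)ds$, we will show that $\frac{\d}{2}\Phi\les V_{\scT}(x)-TV^*(x)+C+|\l_{11}^*|\,|Y_1(T)-y_1^*|$. This rests on the upper bound, proved below, giving $V_{\scT}(x)-TV^*(x)\les K$. Next, \autoref{lmm:X-bound}(i), applied to $\bX_{\scT}-x^*$ with forcing $B(\bu_{\scT}-u^*)$, gives $|\bX_{\scT}(T)-x^*|^2\les C(1+\Phi)$. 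Combining the two, $\Phi\les C+C\sqrt{1+\Phi}$, so $\Phi$ is bounded independently of $T$. Then $|Y_1(T)-y_1^*|$ is bounded, and the lower bound follows.

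\emph{Upper bound.} We use the comparison control $\ti u$ constructed in Step 2 of the proof of \autoref{prop:ProbO}, with $(z,v)=(x^*,u^*)\in\sV_x$. There $|\ti X(t)-x^*|+|\ti u(t)-u^*|\les Ce^{-\l t}$ and $\ti X,\ti u$ are bounded. Since $f$ is $C^2$, it is locally Lipschitz on the compact set containing these trajectories, so
$$
V_{\scT}(x)-TV^*(x)\les\int_0^T[f(\ti X,\ti u)-f(x^*,u^*)]dt\les L\int_0^\i Ce^{-\l t}dt,
$$
which is a constant independent of $T$. Note that the upper bound is proved first, since the lower-bound argument uses it; with both one-sided bounds in hand, \rf{Value-bound} follows.
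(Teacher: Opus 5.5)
Your proof is correct, and its skeleton matches the paper's. The upper bound uses a comparison control from Step 2 of the proof of \autoref{prop:ProbO}. The lower bound uses \autoref{lmm:Lower-bound} with $\varPi\ges\frac{\d}{2}I_{n+m}$, the exponential decay of $Y_2-y_2^*$ (since its initial value lies in $\sG_1$), and \autoref{lmm:X-bound}(i) to control the boundary term.

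Two details differ.

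\emph{Upper bound.} You bound $f(\ti X,\ti u)-f(x^*,u^*)$ directly, using local Lipschitz continuity of $f$ and the exponential decay that the Step 2 construction actually delivers. The paper instead applies \autoref{lmm:Lower-bound} to the comparison pair. There the linear terms telescope, so it only needs boundedness and square-integrability of the comparison deviation. Your route is more elementary; the paper's asks less of the comparison pair.

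\emph{Lower bound.} You first bound $\Phi$ by invoking the upper bound, and then control $|Y_1(T)|$. The paper absorbs $K|\bY(0)-\bY(T)|\les\e|\bY(0)-\bY(T)|^2+K^2/(4\e)$ into $\frac{\d}{2}\Phi$ via \autoref{lmm:X-bound}(i), so its lower bound is independent of the upper bound. Your detour is harmless but unnecessary. Your own estimates already give $V_{\scT}(x)-TV^*(x)\ges\frac{\d}{2}\Phi-C-C\sqrt{1+\Phi}$, which is bounded below uniformly in $T$ without any upper bound.

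In exchange, your ordering yields the uniform bound on $\Phi$, and hence on $\sup_t|\bX_{\scT}(t)|$, at the same time. That is exactly what the paper extracts afterwards in \autoref{crllry:MS-Tp}.
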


\begin{proof}
For simplicity, in what follows we let $K>0$ denote a generic constant,
which may vary from line to line. 
By Step 2 in the proof of \autoref{prop:ProbO}, we can choose a bounded continuous function 
$u:[0,\i)\to\dbR^m$ such that the corresponding  state trajectory $X(\cd)$ 
is bounded and  
$$
L\deq \int_0^\i \[|X(t)-x^*|^2 + |u(t)-u^*|^2\]dt <\i. 
$$
With this control $u(\cd)$, the function $\varPi(\cd)$ defined by \rf{def:varPi} is bounded. Set
$$
Y(t) = \Bigl(\begin{smallmatrix}Y_1(t)\\[0.6mm]Y_2(t)\end{smallmatrix}\Bigr)\deq P^{\top}X(t), \q
 y^* = \Bigl(\begin{smallmatrix} y_1^*\\[0.6mm]y_2^* \end{smallmatrix}\Bigr)\deq P^{\top}x^*.
$$
Then $Y(\cd)$ and $y^*$ satisfy equation \rf{Y1-Y2}, and by \autoref{lmm:Lower-bound} we have
(noting that $Y(\cd)$ is bounded on $[0,\i)$)
\begin{align}\label{26-1-8:ineq1}
V_{\scT}(x)-TV^*(x) 
&\les \int_{0}^{T}\[f(X(t),u(t))-V^*(x)\]dt \nn\\
&\les \lan\l_{11}^*, Y_1(0)-Y_1(T)\ran-\int_0^T\lan A_{22}^\top\l_{12}^*+\l_2^*,Y_2(s)-y_2^*\ran ds \nn\\ 
&\hp{=\ } + K\int_{0}^{T}\[|X(t)-x^*|^2 + |u(t)-u^*|^2\] dt \nn\\
&\les K+K\int_0^T|Y_2(s)-y_2^*|ds +KL, \q\forall T>0.
\end{align}
Since $x\in\sX$, it follows from \autoref{prop:steady-1} that 
$$
Y_2(0)-y_2^*=P_2^{\top}(x-x^*)=P_2^{\top}x+c - Q_2(P_2^{\top}x+c)=Q_1(P_2^{\top}x+c)\in\sG_1.
$$
Thus, there exist constants $K,\rho>0$, independent of $T$, such that 
$$
|Y_2(t)-y_2^*| \les K|x-x^*|e^{-\rho t}, \q\forall 0\les t\les T.
$$
It follows that 
\begin{align}\label{26-1-8:ineq2}
\int_0^T|Y_2(s)-y_2^*|ds \les {K\over\rho}|x-x^*|, \q\forall T>0.
\end{align}
Combining \rf{26-1-8:ineq1} and \rf{26-1-8:ineq2} yields
$$
V_{\scT}(x)-TV^*(x) \les K, \q\forall T>0.
$$
For the lower bound, let $(\bX^x_{\scT}(\cd),\bu^x_{\scT}(\cd))$ denote the optimal pair of 
Problem (LC)$_{\scT}$ associated with the initial state $x$. 
Noting that by \ref{A2}, $\varPi(\cd)\ges\frac{\d}{2}I_{n+m}$, 
and again invoking \autoref{lmm:Lower-bound}, we obtain  
\begin{align*}
V_{\scT}(x)-TV^*(x) 
&\ges \frac{\d}{2}\int_0^T\[|\bX^x_{\scT}(t)-x^*|^2+|\bu^x_{\scT}(t)-u^*|^2\]dt \\
&\hp{=\ } -K|\bY_1(0)-\bY_1(T)|-K\int_0^T|\bY_2(s)-y_2^*|ds,
\end{align*}
where $\bY(t)=\Bigl(\begin{smallmatrix}\bY_1(t)\\\bY_2(t)\end{smallmatrix}\Bigr)\deq P^{\top}\bX^x_{\scT}(t)$
satisfies equation \rf{Y1-Y2}.
Similarly to \rf{26-1-8:ineq2}, one can show that $\int_0^T|\bY_2(s)-y_2^*|ds$ is bounded in $T$. 
Moreover, noting that $|\bY_1(0)-\bY_1(T)|\les|\bY(0)-\bY(T)|$, we obtain that for any $\e>0$,
\begin{align}\label{12-19:1}
&V_{\scT}(x)-TV^*(x) \nn\\
&\q\ges \frac{\d}{2}\int_0^T\[|\bX^x_{\scT}(t)-x^*|^2+|\bu^x_{\scT}(t)-u^*|^2\]dt 
      -K|\bar Y(0)-\bar Y(T)| - K, \nn\\
&\q\ges \frac{\d}{2}\int_0^T\[|\bX^x_{\scT}(t)-x^*|^2+|\bu^x_{\scT}(t)-u^*|^2\]dt 
      -\e|\bar Y(0)-\bar Y(T)|^2 - \frac{K^2}{4\e}-K.   
\end{align}
Now applying \autoref{lmm:X-bound} to \rf{Y1-Y2} and noting $|\bY(t)-y^*|=|\bX^x_{\scT}(t)-x^*|$, we obtain
\begin{align}\label{12-19:2}
\sup_{0\les t\les T}|\bX^x_{\scT}(0)-\bX^x_{\scT}(t)|^2 
&=\sup_{0\les t\les T}|\bar Y(0)-\bar Y(t)|^2 \les 2|\bY(0)-y^*|^2+2\sup_{0\les t\les T}|\bY(t)-y^*|^2 \nn\\
&\les 4|\bY(0)-y^*|^2 + K\int_{0}^{T}\[|\bY(t)-y^*|^2 + |\bu^x_{\scT}(t)-u^*|^2\] dt \nn\\
&\les 4|x-x^*|^2 + K\int_{0}^{T}\[|\bX^x_{\scT}(t)-x^*|^2 + |\bu^x_{\scT}(t)-u^*|^2\] dt.  
\end{align}
Substituting this estimate into the previous inequality and choosing
$\e>0$ sufficiently small, we obtain
$$
V_{\scT}(x)-T V^*(x)\ges-K, \q\forall T>0,
$$
where $K>0$ is a constant independent of $T$.  
\end{proof}

From the proof of the above theorem, we also obtain the following corollary. 

\begin{corollary}\label{crllry:MS-Tp}
Let {\rm\ref{A1}--\ref{A2}} hold. 
Then for any $x\in\sX$,  there exists a constant $K>0$, independent of $T$, such that 
$$
\sup_{0\les t\les T}|\bX^x_{\scT}(t)|^2 + \int_0^T\[|\bX^x_{\scT}(t)-x^*|^2+|\bu^x_{\scT}(t)-u^*|^2\]dt
\les K, \q\forall T>0, 
$$
where $(\bX^x_{\scT}(\cd),\bu^x_{\scT}(\cd))$ is the optimal pair of Problem (LC)$_{\scT}$ 
associated with $x$, and $(x^*,u^*)$ is the unique solution of Problem (O) corresponding to $x$.
\end{corollary}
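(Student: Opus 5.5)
The corollary will follow by recycling the two-sided estimate from the proof of \autoref{thm:Value-function}, now read as an a priori bound on the optimal pair rather than on the value. We fix $x\in\sX$ and let $(x^*,u^*)$ be the unique solution of Problem (O), which exists by \autoref{thm:O-solvable}. We also write
$$
I_{\scT}\deq\int_0^T\[|\bX^x_{\scT}(t)-x^*|^2+|\bu^x_{\scT}(t)-u^*|^2\]dt .
$$
The upper half of \rf{Value-bound}, namely $V_{\scT}(x)-TV^*(x)\les K$, was obtained from the explicit feedback control of Step 2 in \autoref{prop:ProbO}, and it does not involve the optimal pair at all. It can therefore be used as an independent input.

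Next we insert the lower chain \rf{12-19:1} together with \rf{12-19:2}. This gives, for every $\e>0$,
$$
V_{\scT}(x)-TV^*(x)\ges \frac{\d}{2}I_{\scT}-\e\big(4|x-x^*|^2+KI_{\scT}\big)-\frac{K^2}{4\e}-K.
$$
We choose $\e$ so small that $\e K\les\d/4$. Combining the result with the upper bound, we get $\frac{\d}{4}I_{\scT}\les K'$, where $K'$ depends on $x$ and $x^*$ but not on $T$. This settles the integral term.

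For the supremum term, \rf{12-19:2} gives $\sup_{t}|\bX^x_{\scT}(t)-x|^2\les 4|x-x^*|^2+KI_{\scT}$. Applying the triangle inequality, $|\bX^x_{\scT}(t)|^2\les 2|x|^2+2|\bX^x_{\scT}(t)-x|^2$, which is bounded uniformly in $T$ once $I_{\scT}$ is. Alternatively, \autoref{lmm:X-bound}(i) can be applied directly to $\bX^x_{\scT}-x^*$ on $[0,T]$, using $Ax^*+Bu^*+b=0$.

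The only delicate point is to check that the constants in \rf{12-19:1} are indeed independent of $T$. This comes down to the bound on $\int_0^T|\bY_2(s)-y_2^*|ds$. That bound holds because $\bY_2$ does not depend on the control, and $\bY_2(0)-y_2^*=Q_1(P_2^\top x+c)\in\sG_1$ decays exponentially, exactly as in \rf{26-1-8:ineq2}. The constant in \autoref{lmm:X-bound}(i), $2|P^\top AP|+1$ together with $|B|$, is likewise independent of $T$. Once these checks are made, the absorption argument closes.
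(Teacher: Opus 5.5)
Your proposal is correct and is essentially the paper's argument. You substitute \rf{12-19:2} into \rf{12-19:1} and choose $\e$ with $K\e\les\d/4$ to absorb the integral term. You then bound the integral using the $T$-independent upper bound $V_{\scT}(x)-TV^*(x)\les K$, which, as you note, comes from the fixed feedback control rather than the optimal pair, and read off the supremum bound from \rf{12-19:2} and $\bX^x_{\scT}(0)=x$. Your check that the constants in \rf{12-19:1} are independent of $T$, via the control-independent exponential decay of $\bY_2-y_2^*$, is exactly the point the paper covers by appealing to \rf{26-1-8:ineq2}.
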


\begin{proof}
Substituting \rf{12-19:2} into \rf{12-19:1} yields
$$
V_{\scT}(x)-T V^*(x)\ges \(\frac{\d}{2}-K\e\)\!\int_0^T\!\[|\bX^x_{\scT}(t)-x^*|^2+|\bu^x_{\scT}(t)-u^*|^2\]dt 
- 4\e|x-x^*|^2 - \frac{K^2}{4\e}-K.
$$
Choose $\e=\d/(4K)$ so that $\frac{\d}{2}-K\e=\d/4$. Then
\begin{equation}\label{MSbound:int}
\int_0^T\[|\bX^x_{\scT}(t)-x^*|^2+|\bu^x_{\scT}(t)-u^*|^2\]dt
\les K\[1+|x-x^*|^2\]+\frac{4}{\d}\[V_{\scT}(x)-TV^*(x)\].
\end{equation}
From \rf{12-19:2} and $\bX^x_{\scT}(0)=x$, we have 
\begin{equation}\label{MSbound:sup}
\sup_{0\les t\les T}|\bX^x_{\scT}(t)|^2
\les 2|x|^2+8|x-x^*|^2+K\int_0^T\[|\bX^x_{\scT}(t)-x^*|^2+|\bu^x_{\scT}(t)-u^*|^2\]dt.
\end{equation}
Substituting \rf{MSbound:int} into \rf{MSbound:sup} and using \rf{Value-bound},
we obtain the desired estimate. 
\end{proof}

For an initial state $x\in\sX$, let $(\bX^x_{\scT}(\cd),\bu^x_{\scT}(\cd))$ denote the optimal 
pair of Problem (LC)$_{\scT}$ associated with $x$, and let $(x^*,u^*)$ be the unique solution 
of Problem (O) corresponding to $x$.
We now turn to the establishment of the exponential turnpike property \rf{Property:ETP}. 
We first show that this property holds for the optimal state trajectory $\bX^x_{\scT}(\cd)$.

\begin{theorem}\label{thm:PETP-X}
Let {\rm\ref{A1}--\ref{A2}} hold. 
Then for any $x\in\sX$, there exist constants $K,\l>0$, independent of $T$, such that
\begin{equation}\label{26-01-05:PETP-X}
|\bX^x_{\scT}(t)-x^*|\les K\[e^{-\l t}+e^{-\l(T-t)}\], \q\forall 0\les t\les T.
\end{equation}
\end{theorem}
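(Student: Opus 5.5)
We follow the Lou--Wang strategy: combine the uniform bound of \autoref{crllry:MS-Tp} with the dynamic programming principle to get a geometric decay of the tail energy, and then convert that decay into pointwise bounds using \autoref{lmm:X-bound}.

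\textbf{Step 1: local energy--value inequality.} Fix $x\in\sX$ and write $(\bX,\bu)=(\bX^x_{\scT},\bu^x_{\scT})$. For $0\les t_1<t_2\les T$ the restriction of $(\bX,\bu)$ to $[t_1,t_2]$ is optimal for the problem on that interval with initial state $\bX(t_1)$ and a terminal-free cost (this is the dynamic programming principle). We apply \autoref{lmm:Lower-bound} on $[t_1,t_2]$. Because $\varPi\ges\frac\d2 I$, this gives
$$
\frac{\d}{2}\int_{t_1}^{t_2}\[|\bX-x^*|^2+|\bu-u^*|^2\]ds
\les \int_{t_1}^{t_2}\[f(\bX,\bu)-V^*(x)\]ds + K|\bX(t_1)-\bX(t_2)| + K\int_{t_1}^{t_2}|\bY_2-y_2^*|ds .
$$
The last term is at most $Ke^{-\rho t_1}$, since $\bY_2-y_2^*=e^{sA_{22}}Q_1(P_2^\top x+c)$ decays exponentially. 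The first term on the right is bounded above by comparison with a competitor. On $[t_1,t_2]$ we use the Step 2 feedback of \autoref{prop:ProbO} started at $\bX(t_1)$: $u=FP_1^\top(X-x^*)+u^*$. Its $Y_2$-component coincides with $\bY_2$, so the competitor's deviation is bounded by $K|\bX(t_1)-x^*|^2+Ke^{-\rho t_1}$. The boundary terms $|\bX(t_i)-x^*|$ are bounded uniformly by \autoref{crllry:MS-Tp}.

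\textbf{Step 2: exponential decay of tails.} Set $\phi(t)\deq\int_t^T[|\bX-x^*|^2+|\bu-u^*|^2]ds$. By \autoref{crllry:MS-Tp}, $\phi(0)\les K$. Because of the boundary terms, Step 1 alone gives only $\phi(t)\les K(|\bX(t)-x^*|^2+|\bX(T)-x^*|+e^{-\rho t})$, which is not self-improving. The remedy is the pointwise estimate: by the observability inequality \rf{X-bound:2-1} in \autoref{lmm:X-bound}(ii), applied to \rf{Y1-Y2} with $C=I$, we get $|\bX(t)-x^*|^2\les K_1\int_t^{t+1}[\ldots]ds\les K_1(\phi(t)-\phi(t+1))$ for $t\les T-1$. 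The terminal contribution is handled symmetrically with \rf{X-bound:2-2}, localizing near $T$. Combining these gives a difference inequality of the form $\phi(t+1)\les \theta\phi(t)+Ke^{-\rho t}+(\text{terminal term})$ with $\theta<1$. Iterating yields $\int_t^{t+1}[\ldots]\les K(e^{-\l t}+e^{-\l(T-t)})$.

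\textbf{Step 3: pointwise bound.} Applying \rf{X-bound:2-1} for $t\les T-1$ and \rf{X-bound:2-2} for $t\ges1$ to the unit-window energy from Step 2 gives $|\bX(t)-x^*|^2\les K(e^{-\l t}+e^{-\l(T-t)})$. Taking square roots and renaming $\l$ yields \rf{26-01-05:PETP-X}. The case $T\les 2$ is trivial from the uniform bound.

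\textbf{Main obstacle.} The difficulty is Step 2, namely absorbing the boundary term $\lan\l_{11}^*,Y_1(t_1)-Y_1(t_2)\ran$, which is only linear in the deviation. The approach we would try first is to work with the two-sided quantity $\int_{t}^{T-t}$ rather than $\int_t^T$. We would compare $V$ on $[t,T-t]$ with a competitor that steers from $\bX(t)$ to near $x^*$ in unit time, holds $(x^*,u^*)$, and then copies the free terminal behaviour. This makes the linear boundary terms cancel up to $O(|\bX(t)-x^*|^2+|\bX(T-t)-x^*|^2)$ plus the exponentially small $\bY_2$ term. Then $\psi(t)\deq\int_t^{T-t}$ satisfies $\psi(t)\les K(\psi(t-1)-\psi(t))+Ke^{-\rho t}$, which gives geometric decay of $\psi$.
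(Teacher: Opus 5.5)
Steps 1--2 as written do not work, but the repair you sketch under ``Main obstacle'' does, once made exact.

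\textbf{The gap.} The dynamic-programming claim in Step 1 is false for $t_2<T$. The restriction of $(\bX,\bu)$ to $[t_1,t_2]$ is optimal only among pairs joining $\bX(t_1)$ to $\bX(t_2)$, not for the terminal-free problem on $[t_1,t_2]$. So your free-end feedback competitor is admissible only with $t_2=T$.

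With $t_2=T$, the leftover term $\lan\l_{11}^*,\bY_1(T)-Y_1(T)\ran$ is linear in $\bX(T)-x^*$, and nothing makes it small. The bound \rf{X-bound:2-2} only gives $|\bX(T)-x^*|^2\les K\int_{T-1}^T[\cdots]$, and that integral is in general of order one. This is the terminal layer: the adjoint vanishes at $T$, so $(\bX(T),\bu(T))$ need not be near $(x^*,u^*)$. In fact $\phi(t)\ges\int_{T-1}^T[\cdots]$, which in general does not tend to zero. Your recursion on $\phi$ therefore carries an $O(1)$ forcing term and cannot produce $e^{-\l(T-t)}$. The sentence ``Iterating yields \dots'' is unjustified, and Step 3 inherits the problem.

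\textbf{The repair.} The competitor on $[t,T-t]$ must arrive \emph{exactly} at $\bX(T-t)$ at time $T-t$. Only then does concatenating it with $\bu$ on $[0,t]\cup[T-t,T]$ give an admissible control whose cost differs from $V_{\scT}(x)$ only on $[t,T-t]$. ``Steer near $x^*$, then copy the terminal behaviour'' leaves an endpoint mismatch, and its effect on the tail cost is again linear.

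The concrete construction is the paper's:
\begin{enumerate}[(i)]
\item write $P_1^\top(\bX-x^*)=\bar\Lambda+\bar\Sigma$ using a stabilizing feedback $F$;
\item steer $\bar\Lambda$ from $\bar\Lambda(t)$ to $0$ on $[t,t+1]$, hold it at $0$, and steer it from $0$ to $\bar\Lambda(T-t)$ on $[T-t-1,T-t]$, using unit-time Gramian controls (the paper's $\xi$ and $\eta$);
\item keep $\Sigma=\bar\Sigma$ and $Y_2=\bY_2$.
\end{enumerate}
Then \autoref{lmm:Lower-bound}, applied to both pairs on $[t,T-t]$, cancels the $\l_{11}^*$-terms and the $Y_2$-terms exactly. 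You also need a Hessian bound on a compact set, since \ref{A2} gives no global upper bound; it is available because \autoref{crllry:MS-Tp} bounds the competitor uniformly. This yields $\psi(t)\les K(|\bX(t)-x^*|^2+|\bX(T-t)-x^*|^2)+Ke^{-2\rho t}$ for $1\les t\les T/2-1$. Your observability step with $C=I$, namely \rf{X-bound:2-2} on $[t-1,t]$ and \rf{X-bound:2-1} on $[T-t,T-t+1]$, then gives $\psi(t)\les\theta\psi(t-1)+Ke^{-2\rho t}$ with $\theta<1$. Hence $\psi(t)\les Ke^{-\l t}$, and the pointwise bound follows as in your Step 3.

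\textbf{Comparison with the paper.} Once repaired, your argument shares the paper's core: an endpoint-matching competitor on a two-sided interval, plus the exact cancellation from \autoref{lmm:Lower-bound}. The difference is bookkeeping.
\begin{itemize}
\item The paper selects good times $\a_i,\b_i$ by the mean value theorem, with $|\bX(\a_i)-x^*|,|\bX(\b_i)-x^*|\les\e^i$. It then proves $\int_{\a_i}^{\b_i}[\cdots]\les L\e^{2i}$ by induction, with gaps below $S$, and reads off the rate $-\ln\e/S$.
\item Your hole-filling recursion replaces the selection of good times by the elementary estimate bounding endpoint deviations by energy on adjacent unit windows. This avoids the sequences and the $\e$-dependent constants $\a_0,T_0,S$, and gives the rate directly from $\theta$ and $\rho$. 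Working on symmetric windows costs nothing here.
\end{itemize}
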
  

\begin{proof}
Fix $x\in\sX$, and let $M$ be the minimal constant $K$ for which the estimate
in \autoref{crllry:MS-Tp} holds. 
For notational simplicity, in what follows we suppress the superscript $x$ in
$\bX^x_{\scT}(\cd)$ and $\bu^x_{\scT}(\cd)$.
Since $x\in\sX$, it follows from \autoref{prop:steady-1} that 
$$
P_2^{\top}(x-x^*)=P_2^{\top}x+c - Q_2(P_2^{\top}x+c)=Q_1(P_2^{\top}x+c)\in\sG_1.
$$
Thus, the solution $Z(\cd)$ to the ODE
$$
\dot Z(t) = A_{22}Z(t), \q Z(0)=P_2^{\top}(x-x^*) 
$$
satisfies
\begin{equation}\label{26-01-07:ineq1}
|Z(t)| \les K_1|x-x^*|e^{-\rho t}, \q\forall t\ges0,
\end{equation}
for some constants $K_1,\rho>0$. Choose $\e\in(0,1)$ and define
$$
\a_0\deq -\ln\e/\rho, \q T_0= M/\e^2.
$$
Then by \autoref{crllry:MS-Tp}, for any $T>2(T_0+\a_0)$, we have 
$$
\frac{1}{T_0}\int_{[\a_0,\a_0+T_0]\cup[T-\a_0-T_0,T-\a_0]}\[|\bX_{\scT}(t)-x^*|^2+|\bu_{\scT}(t)-u^*|^2\]dt 
\les\frac{M}{T_0} = \e^2.
$$
Consequently, by the integral mean value theorem, there exist $\a_1\in[\a_0,\a_0+T_0]$ 
and $\b_1\in[T-\a_0-T_0,T-\a_0]$ such that 
$$
|\bX_{\scT}(\a_1)-x^*|\les\e,  \q |\bX_{\scT}(\b_1)-x^*|\les\e.
$$
We claim that there exist constants $S,L>0$, independent of $T$, and two finite sequences 
$\{\a_i\}_{i=1}^{j}$ and $\{\b_i\}_{i=1}^{j}$ such that 
\begin{align}
\label{TpX-claim1}
& \a_0<\a_1<\cdots<\a_{j}<\b_{j}<\cdots<\b_1<T-\a_0,\q \b_j-\a_j<2S, \\
\label{TpX-claim2}
& \a_0\les \a_{i+1}-\a_{i}<S,\q \a_0\les \b_{i}-\b_{i+1}<S,\q i=1,\dots,j-1, \\
\label{TpX-claim3}
& |\bX_{\scT}(\a_i)-x^*|\les \e^i,\q |\bX_{\scT}(\b_i)-x^*|\les \e^i,\q i=1,\dots,j, \\
\label{Ineq-abl}
&\int_{\a_i}^{\b_i}\[|\bX_{\scT}(s)-x^*|^2+|\bu_{\scT}(s)-u^*|^2\]ds\les L\e^{2i}, \q i=1,\dots,j.
\end{align}
If we can establish this claim, then noting that 
$$
{d\over dt}[\bX_{\scT}(t)-x^*]=A[\bX_{\scT}(t)-x^*]+B[\bu_{\scT}(t)-u^*]
$$
and applying \autoref{lmm:X-bound}, we obtain from \rf{TpX-claim3} and \rf{Ineq-abl} that
\begin{align*}
\sup_{\a_i\les t\les\b_i}|\bX_{\scT}(t)-x^*|^2 
\les |\bX_{\scT}(\a_i)-x^*|^2+ K\!\int_{\a_i}^{\b_i}\!\[|\bX_{\scT}(s)-x^*|^2+|\bu_{\scT}(s)-u^*|^2\]ds  
\les K\e^{2i}.
\end{align*}
Here and throughout the remainder of the proof, $K>0$ denotes a generic
constant independent of $T$, whose value may vary from line to line.
It follows that
\begin{equation}\label{Sec4:Ineq_ab}
\begin{aligned}
&|\bX_{\scT}(t)-x^*|\les Ke^{i\ln\e}, \q t\in[\a_i,\a_{i+1})\cup(\b_{i+1},\b_i], \q i=1,\dots,j-1, \\
&|\bX_{\scT}(t)-x^*|\les Ke^{j\ln\e}, \q t\in[\a_j,\b_j].
\end{aligned}
\end{equation}
Moreover, by \rf{TpX-claim2} and noting that $\a_1\les\a_0+T_0$ and $\b_1\ges T-\a_0-T_0$, we have
\begin{align*}
&iS\ges \a_{i+1}-\a_1\ges t-\a_0-T_0,  \q\forall t\in[\a_i,\a_{i+1}], \q i=1,\dots,j-1, \\
&iS\ges \b_1-\b_{i+1}\ges T-t-a_0-T_0, \q\forall t\in[\b_{i+1},\b_i], \q i=1,\dots,j-1, \\
&jS=(j-1)S+S\ges\a_j-\a_1+S\ges t-\a_0-T_0,   \q\forall t\in[\a_j,\a_j+S],\\
&jS=(j-1)S+S\ges\b_1-\b_j+S\ges T-t-\a_0-T_0, \q\forall t\in[\b_j-S,\b_j]. 
\end{align*}
Set $\l\deq\frac{-\ln\e}{S}$. Combining the above estimates with \rf{Sec4:Ineq_ab} 
and noting that $\b_j-\a_j<2S$, we obtain
$$
|\bX_{\scT}(t)-x^*|\les Ke^{\l(\a_0+T_0)}\[e^{-\l t}  + e^{-\l(T-t)}\], \q\forall t\in[\a_1,\b_1].
$$
Since $\e$, $\a_0$, $T_0$, and $S$ are all independent of $T$, the constant
$Ke^{\l(\a_0+T_0)}$ is independent of $T$ as well. 
On the boundary layers $[0,\a_1]\cup[\b_1,T]$, the uniform bound
$$
\sup_{0\les t\les T}|\bX_{\scT}(t)|\les \sqrt{M}, \q\forall T>0
$$ 
from \autoref{crllry:MS-Tp} implies that $|\bX_{\scT}(t)-x^*|$ is uniformly bounded; 
hence, enlarging the prefactor if necessary, the same two-sided exponential bound holds 
for all $t\in[0,T]$, which yields \rf{26-01-05:PETP-X}.

\ms 

To complete the proof, it remains to establish the claim. 
We have already found $\a_0\les\a_1<\b_1\les T-\a_0$ such that 
\begin{gather*}
|\bX_{\scT}(\a_1)-x^*|\les\e,  \q |\bX_{\scT}(\b_1)-x^*|\les\e, \\
\int_{\a_1}^{\b_1}\[|\bX_{\scT}(s)-x^*|^2+|\bu_{\scT}(s)-u^*|^2\]ds\les M=T_0\e^{2}.
\end{gather*}
We next refine the estimate for the above integral term.
Recall the notation introduced in \rf{Def-P12} and \rf{Decomposition-AB}. Define  
$$
\bY_{\scT}(t)\deq P^\top[\bX_{\scT}(t)-x^*], \q \bY_{i,\scT}(t)\deq P_i^\top[\bX_{\scT}(t)-x^*], \q i=1,2.
$$
Then we have
$$\lt\{\begin{aligned}
\dot \bY_{1,\scT}(t) & = A_{11} \bY_{1,\scT}(t) + B_1[\bu_{\scT}(t)-u^*] + A_{12}\bY_{2,\scT}(t), \\
\dot \bY_{2,\scT}(t) & = A_{22} \bY_{2,\scT}(t), \\
\bY_{1,\scT}(0) & = P_1^{\top}(x-x^*),\q \bY_{2,\scT}(0)=P_2^{\top}(x-x^*). 
\end{aligned}\rt.$$
According to \rf{26-01-07:ineq1}, we have 
\begin{equation}\label{26-01-07:ineq1*}
|\bY_{2,\scT}(t)| = |Z(t)| \les K_1|x-x^*|e^{-\rho t}, \q\forall 0\les t\les T.
\end{equation}
Because the pair $(A_{11},B_1)$ is controllable, by the pole-placement theorem there exist 
a matrix $F\in\dbR^{m\times k}$ and a constant $K_2>0$ such that  
\begin{equation}\label{26-01-07:ineq2}
|e^{t(A_{11}+B_1F)}| \les K_2 e^{-2\rho t}, \q\forall t\ges0. 
\end{equation}
Define 
$$
\bv_{\scT}(t)\deq \bu_{\scT}(t)- u^* - F\bY_{1,\scT}(t), \q \hA_{11} \deq A_{11} + B_1F.
$$
Then $\bY_{1,\scT}(\cd)$ satisfies 
$$
\dot \bY_{1,\scT}(t) = \hA_{11} \bY_{1,\scT}(t) + B_1\bv_{\scT}(t) + A_{12}\bY_{2,\scT}(t),
\q   \bY_{1,\scT}(0) = P_1^{\top}(x-x^*).
$$ 
Let $(\bar\L_{\scT}(\cd),\bar\Si_{\scT}(\cd))$ be the solution to the following ODE:
$$\lt\{\begin{aligned}
\dot{\bar\L}_{\scT}(t) &= \hA_{11}{\bar\L}_{\scT}(t) + B_1\bv_{\scT}(t), \q t\in[0,T],\\
\dot{\bar\Si}_{\scT}(t) &= \hA_{11}{\bar\Si}_{\scT}(t) + A_{12}\bY_{2,\scT}(t), \q t\in[0,T],\\
\bar\L_{\scT}(0)& = P_1^{\top}(x-x^*),\q \bar\Si_{\scT}(0)=0.
\end{aligned}\rt.$$
Then 
\begin{align}\label{26-1-8:bY=}
\bY_{1,\scT}(t) = \bar\L_{\scT}(t) + \bar\Si_{\scT}(t),\q t\in[0,T].
\end{align}
Moreover, using \rf{26-01-07:ineq1*} and \rf{26-01-07:ineq2} we obtain
\begin{align}\label{26-1-8:Si}
|\bar\Si_{\scT}(t)|\les \int_{0}^{t}|e^{(t-s)\hA_{11}}| |A_{12}| |\bY_{2,\scT}(s)|ds 
\les \frac{K_1K_2|A_{12}| |x-x^*|}{\rho}e^{-\rho t},\q\forall t\in[0,T].
\end{align}
Now define 
\begin{align*}
\xi(t) &\deq -B_1^{\top}e^{-(t-\a_1)\hA_{11}^{\,\top}}
\lt(\int_0^1e^{-s\hA_{11}}B_1B_1^{\top}e^{-s\hA_{11}^{\,\top}}ds\rt)^{-1}\bar\L_{\scT}(\a_1), \q t\in[\a_1,\a_1+1], \\
\eta(t) &\deq  B_1^{\top}e^{-(t-\b_1)\hA_{11}^{\,\top}}
\lt(\int_{-1}^0e^{-s\hA_{11}}B_1B_1^{\top}e^{-s\hA_{11}^{\,\top}}ds\rt)^{-1}\bar\L_{\scT}(\b_1), \q t\in[\b_1-1,\b_1],
\end{align*}
and set
$$
v(t)\deq\begin{cases}
\bv_{\scT}(t), &\q t\in[0,\a_1),\\ 
\xi(t),        &\q t\in[\a_1,\a_1+1], \\
0,             &\q t\in (\a_1+1,\b_1-1),\\
\eta(t),       &\q t\in [\b_1-1,\b_1], \\
\bv_{\scT}(t), &\q t\in (\b_1,T].
\end{cases}
$$
Let $(Y_1(\cd),Y_2(\cd))$, $(\L(\cd),\Si(\cd))$, and $X(\cd)$ denote the solutions to 
$$
\lt\{\begin{aligned}
\dot Y_1(t) &= \hA_{11}Y_1(t) + B_1v(t) + A_{12}Y_2(t), \\
\dot Y_2(t) &= A_{22}Y_2(t), \\
Y_1(0) &= P_1^{\top}(x-x^*),\q Y_2(0)=P_2^{\top}(x-x^*),
\end{aligned}\rt. \q
\lt\{\begin{aligned}
\dot\L(t) &= \hA_{11}\L(t) + B_1v(t), \\
\dot\Si(t) &= \hA_{11}\Si(t) + A_{12}Y_2(t), \\
\L(0)& = P_1^{\top}(x-x^*),\q \Si(0)=0,
\end{aligned}\rt.
$$
and  
\begin{equation*}
\dot X(t) = AX(t) + Bu(t) +b, \q X(0) = x,
\end{equation*}
respectively, where $u(t)\deq v(t)+u^*+FY_1(t)$. Then  
\begin{equation}\label{Sec4:XbXt}\lt\{\begin{aligned}
Y_1(t)&= \L(t) + \Si(t),\q\forall t\in[0,T], \\
Y_2(t)&=\bY_{2,\scT}(t), \q\forall t\in[0,T],\\
\L(t)&=\bar\L_{\scT}(t), \q\forall t\in [0,\a_1]\cup[\b_1,T], \\ 
\L(t)&=0, \q\forall t\in [\a_1+1,\b_1-1],\\
\Si(t)&=\bar\Si_{\scT}(t),\q\forall t\in[0,T],\\
Y_i(t)&= P_i^\top[X(t)-x^*], \q\forall t\in[0,T], \q i=1,2.  
\end{aligned}\rt.\end{equation}
By the definitions of $\xi(\cd)$ and $\eta(\cd)$, we can find a constant $L_1>0$, 
depending only on $B_1$ and $\hA_{11}$, such that 
\begin{equation}\label{26-1-8:sup}\lt\{\begin{aligned}
&\sup_{t\in[\a_1,\a_1+1]}\[|\L(t)|^2+|v(t)|^2\]\les L_1^2|\bar\L_{\scT}(\a_1)|^2,\\ 
&\sup_{t\in[\b_1-1,\b_1]}\[|\L(t)|^2+|v(t)|^2\]\les L_1^2|\bar\L_{\scT}(\b_1)|^2.
\end{aligned}\rt.\end{equation}
From \rf{26-1-8:bY=} and \rf{26-1-8:Si}, by setting $K_3=\frac{K_1K_2|A_{12}||x-x^*|}{\rho}$ and 
noting that $\rho\b_1\ges\rho\a_1\ges\rho\a_0\ges-\ln\e$, we obtain
\begin{align*}
|\bar\L_{\scT}(\a_1)|
&\les |\bY_{1,\scT}(\a_1)|+|\bar\Si_{\scT}(\a_1)| 
\les |\bX_{\scT}(\a_1)-x^*| + K_3e^{-\rho\a_1} 
\les \e(1+K_3)
\les 1+K_3, \\
|\bar\L_{\scT}(\b_1)|
&\les |\bY_{1,\scT}(\b_1)|+|\bar\Si_{\scT}(\b_1)| 
\les |\bX_{\scT}(\b_1)-x^*| + K_3e^{-\rho\b_1} 
\les \e(1+K_3)
\les 1+K_3.  
\end{align*}
Consequently, by \rf{26-01-07:ineq1*} and \rf{26-1-8:Si}--\rf{26-1-8:sup},
\begin{align*}
|X(t)| &\les |x^*| + |X(t)-x^*| \les |x^*| + |Y_1(t)| + |Y_2(t)| \les |x^*| + |\L(t)| + |\Si(t)| + |Y_2(t)| \\
       &\les |x^*| + L_1(1+K_3) + K_3 + K_1|x-x^*| \equiv r_1, \q\forall t\in[\a_1,\b_1], \\
|u(t)| &\les |u^*| + |v(t)| + (|\L(t)| + |\Si(t)|)|F| \\
       &\les |u^*| + L_1(1+|F|)(1+K_3) + |F|K_3\equiv r_2, \q\forall t\in[\a_1,\b_1].
\end{align*} 
Define
$$
L_2 \deq \max_{|x|\les r_1, |u|\les r_2}|\nabla^2 f(x,u)|
$$
and note that by \eqref{Sec4:XbXt},
$$
Y_2(t)=\bY_{2,\scT}(t), ~\forall t\in[0,T]; \qq
Y_1(t)=\bY_{1,\scT}(t), ~\forall t\in [0,\a_1]\cup[\b_1,T].
$$
Then, by applying \autoref{lmm:Lower-bound} twice, we obtain  
\begin{align*}
&{\d\over2}\int_{\a_1}^{\b_1}\[|\bX_{\scT}(s)-x^*|^2+|\bu_{\scT}(s)-u^*|^2\]ds \\
&\q\les \int_{\a_1}^{\b_1}\[f(\bX_{\scT}(s),\bu_{\scT}(s))-V^*(x)
        +\lan\l_2^*+A_{22}^\top\l_{12}^*,\bY_{2,\scT}(s)\ran\]ds \\
&\hp{\q=\ } +\lan\l_{11}^*,\bY_{1,\scT}(\b_1)-\bY_{1,\scT}(\a_1)\ran \\
&\q\les \int_{\a_1}^{\b_1}\[f(X(s),u(s))-V^*(x)+\lan\l_2^*+A_{22}^\top\l_{12}^*,Y_2(s)\ran\]ds 
        +\lan\l_{11}^*,Y_1(\b_1)-Y_1(\a_1)\ran \\
&\q\les \frac{L_2}{2}\int_{\a_1}^{\b_1}\[|X(s)-x^*|^2+|u(s)-u^*|^2\]ds \\
&\q= \frac{L_2}{2}\int_{\a_1}^{\b_1}\[|Y(s)|^2+|v(s)+FY_1(s)|^2\]ds \\
&\q\les \frac{L_2}{2}\int_{\a_1}^{\b_1}\[2(1+2|F|^2)\(|\L(s)|^2+|\Si(s)|^2\)+|Y_2(s)|^2+2|v(s)|^2\]ds \\
&\q\les L_2(1+2|F|^2)\int_{[\a_1,\a_1+1]\cup[\b_1-1,\b_1]}\[|\L(s)|^2 + |v(s)|^2\]ds \\
&\hp{\q=\ }+ \frac{L_2}{2}\int_{\a_1}^{\b_1}\[2(1+2|F|^2)|\Si(s)|^2+|Y_2(s)|^2\]ds \\
&\q\les 2L_1^2L_2(1+2|F|^2)(1+K_3)^2\e^2 + \frac{L_2(1+2|F|^2)K_3^2}{2\rho}\e^2 
        + \frac{L_2K_1^2|x-x^*|^2}{4\rho}\e^2.        
\end{align*}
By setting 
\begin{equation}\label{def:constant-L}
L\deq \frac{2}{\d}\lt[2L_1^2L_2(1+2|F|^2)(1+K_3)^2 + \frac{L_2(1+2|F|^2)K_3^2}{2\rho} 
      + \frac{L_2K_1^2|x-x^*|^2}{4\rho}\rt],
\end{equation}
we obtain 
\begin{equation*}
\int_{\a_1}^{\b_1}\[|\bX_{\scT}(s)-x^*|^2+|\bu_{\scT}(s)-u^*|^2\]ds\les L\e^{2}.
\end{equation*}
Now, take $S\deq\a_0+L/\e^2$. Then we have 
\begin{align*}
&\frac{1}{S-\a_0}\int_{\a_1+\a_0}^{\a_1+S}\[|\bX_{\scT}(s)-x^*|^2+|\bu_{\scT}(s)-u^*|^2\]ds \les \e^{4},\\
&\frac{1}{S-\a_0}\int_{\b_1-S}^{\b_1-\a_0}\[|\bX_{\scT}(s)-x^*|^2+|\bu_{\scT}(s)-u^*|^2\]ds \les \e^{4}.
\end{align*}
By the integral mean value theorem, we can find $\a_2\in[\a_1+\a_0,\a_1+S)$ and 
$\b_2\in(\b_1-S,\b_1-\a_0]$ such that 
$$
|\bX_{\scT}(\a_{2})-x^*|\les\e^2,  \q |\bX_{\scT}(\b_{2})-x^*|\les\e^2.
$$
Repeating the previous procedure, we can show that
$$
\int_{\a_2}^{\b_2}\[|\bX_{\scT}(s)-x^*|^2+|\bu_{\scT}(s)-u^*|^2\]ds\les L\e^4
$$
with the same constant $L$ defined in \rf{def:constant-L}, and that there exist $\a_3\in[\a_2+\a_0,\a_2+S)$ 
and $\b_3\in(\b_2-S,\b_2-\a_0]$ such that 
$$
|\bX_{\scT}(\a_3)-x^*|\les\e^3,  \q |\bX_{\scT}(\b_3)-x^*|\les\e^3.
$$
Therefore, by induction, the claim holds.  
\end{proof}

\autoref{thm:PETP-X} establishes the exponential turnpike property for the optimal state trajectory 
$\bX^x_{\scT}(\cd)$ under the strong convexity condition \ref{A2} imposed on the stage cost function $f$.
However, condition \ref{A2} alone does not guarantee the exponential turnpike property for the optimal 
control $\bu^x_{\scT}(\cd)$ in general.
In the next result, we establish the exponential turnpike property for the optimal control 
$\bu^x_{\scT}(\cd)$ by imposing an additional assumption.
Let $\cB^n_r(x)$ denote the open ball of radius $r$ centered at $x\in\dbR^n$. 

\begin{taggedassumption}{(A3)}\label{A3}
For any $r>0$, there exists a constant $\g(r)>0$, such that 
$$
|f_{xu}(x,u)[f_{uu}(x,u)]^{-1}|\les \g(r),\q\forall(x,u)\in\overline{\cB_{r}^n(0)}\times \dbR^m.
$$
\end{taggedassumption}

\begin{theorem}\label{thm:PETP-u}
Let {\rm\ref{A1}--\ref{A3}} hold. 
Then for any $x\in\sX$, there exist constants $K,\l>0$, independent of $T$, such that 
\begin{equation}\label{26-1-15:PETP-u}
|\bu_{\scT}(t)-u^*|\les K\[e^{-\l t}+e^{-\l(T-t)}\],\q \ae~ t\in[0,T], \q\forall T>0.
\end{equation}
\end{theorem}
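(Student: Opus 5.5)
We will use the Pontryagin maximum principle to turn the control estimate into an estimate on the adjoint state. Then we will prove the exponential turnpike for the adjoint by combining the state estimate of \autoref{thm:PETP-X} with the observability bound of \autoref{lmm:X-bound} (ii).

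\emph{Optimality system.} Since the problem is convex and unconstrained, the optimal pair satisfies
$$
\dot{\bar\psi}(t)=-A^\top\bar\psi(t)-f_x(\bX_{\scT}(t),\bu_{\scT}(t)),\q \bar\psi(T)=0,\qq f_u(\bX_{\scT}(t),\bu_{\scT}(t))+B^\top\bar\psi(t)=0 \ \ae
$$
By \rf{Prob-O-xu}, the steady pair satisfies $f_u(x^*,u^*)+B^\top\l_1^*=0$. Set $\f(t)\deq\bar\psi(t)-\l_1^*$, $\hat X\deq\bX_{\scT}-x^*$, $\hat u\deq\bu_{\scT}-u^*$.

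\emph{Step 1: reduce the control to the state and adjoint.} \autoref{crllry:MS-Tp} gives $|\bX_{\scT}(t)|\les\sqrt M\deq r$ uniformly in $T$. Along the segment joining $(x^*,u^*)$ to $(\bX_{\scT},\bu_{\scT})$, the mean-value form of the control condition reads $\bar f_{ux}\hat X+\bar f_{uu}\hat u=-B^\top\f$, where $\bar f_{uu}\ges\d I$. Hence
$$
\hat u=-\bar f_{uu}^{-1}\bigl(\bar f_{ux}\hat X+B^\top\f\bigr),\q |\hat u(t)|\les \g(r')|\hat X(t)|+\d^{-1}|B||\f(t)|,
$$
with $r'=\max(r,|x^*|)$. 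Here we use \ref{A3} in the transposed form $|f_{uu}^{-1}f_{ux}|=|f_{xu}f_{uu}^{-1}|$. One technical point is that \ref{A3} is pointwise, while we need it for the integrated Hessians $\bar f_{ux},\bar f_{uu}$. If that fails, the fallback is to write $f_u(\bX,\bu)-f_u(\bX,u^*)$ and $f_u(\bX,u^*)-f_u(x^*,u^*)$ separately. Then only $f_{uu}$ is integrated, and $f_{ux}$ is evaluated on a compact set, where it is bounded by continuity. This second route does not even need \ref{A3} for the bound itself, only for uniformity. By \autoref{thm:PETP-X}, it therefore suffices to prove $|\f(t)|\les K[e^{-\l t}+e^{-\l(T-t)}]$.

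\emph{Step 2: adjoint equation and the main obstacle.} The adjoint satisfies $\dot\f=-A^\top\f-[f_x(\bX_{\scT},\bu_{\scT})-f_x(x^*,u^*)]-P_2\l_2^*$. The constant term $-P_2\l_2^*$ is the main obstacle: $\f$ does not tend to zero along the uncontrollable directions, and in fact $\bar\psi(T)=0\ne\l_1^*$. Only $B^\top\f$ matters for $\hat u$, however. In coordinates $P^\top\f=(\f_1,\f_2)$, we have $B^\top\f=B_1^\top\f_1$, and
$$
\dot\f_1=-A_{11}^\top\f_1-P_1^\top[f_x(\cdot)-f_x(x^*,u^*)],
$$
which does not involve $\f_2$ or $\l_2^*$, since $P_1^\top P_2=0$. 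This gives a closed equation for $\f_1$. Substituting Step 1 into the forcing expresses $\f_1$'s forcing through $\hat X$ and $B_1^\top\f_1$ only. Write $\hat X$'s contribution as $g(t)$ with $|g|\les K|\hat X|$, and add a term $-C_1B_1^\top\f_1$ with $C_1$ bounded.

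\emph{Step 3: exponential bound for $\f_1$.} We use two inputs:
\begin{itemize}
\item The estimate $\int_0^T|\f_1|^2\les K$. It follows from the $L^2$ bound on $\hat u$ in \autoref{crllry:MS-Tp} together with $B_1^\top\f_1=-(\bar f_{uu}\hat u+\bar f_{ux}\hat X)$, which bounds $\int|B_1^\top\f_1|^2$. The observability of $(A_{11}^\top,B_1^\top)$, which is dual to the controllability of $(A_{11},B_1)$, then upgrades this through \rf{X-bound:2-1}.
\item An exponential bound on windows. Apply \rf{X-bound:2-1} (or \rf{X-bound:2-2}) with $C=B_1^\top$ and drift $-A_{11}^\top$ on windows $[t,t+\a]$, treating $C_1B_1^\top\f_1$ as part of the observed output.
\end{itemize}
These give $|\f_1(t)|^2\les K\int_t^{t+\a}(|B_1^\top\f_1|^2+|\hat X|^2)$. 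Here $|B_1^\top\f_1|\les K(|\hat u|+|\hat X|)$. The $\hat u$ part is only in $L^2$, so we then repeat the nested-interval bootstrap of \autoref{thm:PETP-X}. The decay $\int_{\a_i}^{\b_i}(|\hat X|^2+|\hat u|^2)\les L\e^{2i}$ was established there, and we bound the window integrals by $\e^{2i}$. This yields $|\f_1(t)|\les K[e^{-\l t}+e^{-\l(T-t)}]$ on $[\a_1,\b_1]$, with $\a_1,T-\b_1$ bounded independently of $T$. On the boundary layers we use instead the uniform $L^2$ bounds with \rf{X-bound:1}. Combining with Step 1 gives \rf{26-1-15:PETP-u} almost everywhere.
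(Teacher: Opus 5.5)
Your proposal has a genuine gap. It never establishes a $T$-uniform $L^\infty$ bound on $\bar u_T$ (equivalently on $\varphi_1$), yet Step 3 rests on an inequality that is only valid once such a bound is known. Both of your inputs (the bound $\int_0^T|\varphi_1|^2\le K$ and the window bounds by $\varepsilon^{2i}$), and also the boundary-layer step via \eqref{X-bound:1}, use
$|B_1^\top\varphi_1|=|f_u(\bar X_T,\bar u_T)-f_u(x^*,u^*)|\le K(|\hat u|+|\hat X|)$ with $K$ independent of $T$.

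This inequality needs an upper bound on $f_{uu}$ along the segment from $u^*$ to $\bar u_T(t)$, and (A2)--(A3) give none. (A2) is only a lower bound, and (A3) constrains only $f_{xu}f_{uu}^{-1}$. For instance, $f(x,u)=|x|^2+|u|^2+|u|^4$ satisfies (A2)--(A3) ((A3) trivially, since $f_{xu}=0$). But then $|f_u(\bar X_T,\bar u_T)-f_u(x^*,u^*)|$ grows like $|\hat u|^3$, so the $L^2$ bound on $\hat u$ from Corollary~\ref{crllry:MS-Tp} gives no $T$-uniform control of $\int|B_1^\top\varphi_1|^2$.

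You do already have the right structural ingredient. The forcing of the $\varphi_1$-equation is $P_1^\top(C_1B_1^\top\varphi_1+g)$ with $|C_1|\le\gamma(r)$ and $|g|\le K|\hat X|$. Observability then gives $|\varphi_1(t)|^2\le K\int_t^{t+1}(|B_1^\top\varphi_1|^2+|\hat X|^2)$. What is missing is a $T$-uniform bound on the right-hand side. A smaller related point: that decomposition must be obtained by integrating $f_x(\bar X_T,\cdot)$ along the curve $u(y)$ defined by $f_u(\bar X_T,u(y))+y=0$, whose $y$-derivative $-f_{xu}f_{uu}^{-1}$ is bounded pointwise by (A3). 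Naively substituting Step 1 produces the product of averaged matrices $\bar f_{xu}\bar f_{uu}^{-1}$, which (A3) does not control.

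The paper fills exactly this hole in Step 2 of its proof, by an absorption argument:
\begin{enumerate}
\item Let $\tau$ maximize $|\varphi_1|$ over $[0,T-1]$, and apply observability on $[\tau,\tau+1]$.
\item By Markov's inequality, the set where $|\bar u_T-u^*|\ge N_1$ has measure at most $M/N_1^2$. Off this set, $|B_1^\top\varphi_1|\le N_2$ by continuity of $f_u$ on a fixed compact set. On it, $|B_1^\top\varphi_1|\le|B_1|\sup|\varphi_1|$.
\item Choosing $N_1$ large lets you absorb $\sup|\varphi_1|^2$ into the left-hand side.
\item On $[T-1,T]$, a Gronwall argument from the terminal condition suffices.
\end{enumerate}
This gives $\sup_{[0,T]}|\varphi_1|\le K$. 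Your Step 1 bound then gives $\|\bar u_T\|_{L^\infty}\le N$ uniformly in $T$; its fallback version is correct and indeed needs no (A3).

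After that, $\nabla f$ is Lipschitz on a fixed compact set, $|B_1^\top\varphi_1|\le K(|\hat u|+|\hat X|)$ becomes legitimate, and your Step 3 goes through. In fact, your pointwise observability on windows of length $\alpha_0$ inside $[\alpha_i,\beta_i]$ is then slightly more direct than the paper's Lyapunov-function bound on $\int_{\alpha_i}^{\beta_i}|\tilde\varphi_T|^2$.
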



\begin{proof}
Fix $x\in\sX$, and let $M$ be the minimal constant $K$ for which the estimate
in \autoref{crllry:MS-Tp} holds.
We suppress the superscript $x$ in both $\bX_{\scT}^{x}(\cd)$ and $\bu_{\scT}^{x}(\cd)$.
By Pontryagin's maximum principle (see \cite{Pontryagin1962}), there exists an adjoint function $\bps(\cd)$ such that
\begin{equation}\label{Sec5:adj-psi}
\lt\{\begin{aligned}
&\dot{\bar\psi}_{\scT}(t) = -A^{\top}\bps(t)-f_{x}(\bX_{\scT}(t),\bu_{\scT}(t)), \q \ae~t\in[0,T],\\
&\bps(T)=0,
\end{aligned}\rt.
\end{equation}
and
\begin{equation}\label{26-1-14:stationary}
B^{\top}\bps(t) + f_{u}(\bX_{\scT}(t),\bu_{\scT}(t))=0,\q \ae~t\in[0,T].
\end{equation}
We divide the proof into the following steps.

\ms

{\it Step 1.} We show that $\bu_{\scT}(\cd)$ can be expressed as a continuously differentiable function of $(\bX_{\scT}(\cd),\bps(\cd))$ for almost every $t\in[0,T]$.
Under \ref{A2}, for each $(x,u)\in\dbR^{n}\times \dbR^m$, we obtain $f_{uu}(x,u)\ges \d I_{m}$, and hence $f_{uu}(x,u)$ is invertible. 
Moreover, the Taylor's formula with integral remainder yields
$$f_{u}(x,u)-f_u(x,0)=\int_{0}^{1}f_{uu}(x,\th u)ud\th \equiv Q(x,u)u.$$
Therefore, by the Cauchy–Schwarz inequality,
$$\q |f_u(x,u)-f_u(x,0)|\ges \dfrac{u^{\top}Q(x,u)u}{|u|}\ges \d |u|,\q \forall u\in \dbR^m\setminus\{0\},x\in\dbR^n.$$
Consequently, for each fixed $x$, the mapping $u \mapsto f_u(x,u)$ is coercive.
By Hadamard's global inverse function theorem, for every $(x,y)\in\dbR^{n}\times\dbR^{m}$, 
the equation $y+f_u(x,u)=0$ admits a unique solution $u=g(x,y)$.
Further, the implicit function theorem implies that $g$ is continuously differentiable and satisfies 
\begin{equation}\label{Sec5:def-g}
g_x(x,y) = -[f_{uu}(x,g(x,y))]^{-1}f_{ux}(x,g(x,y)),\q g_{y}(x,y)=-[f_{uu}(x,g(x,y))]^{-1}.
\end{equation}
As a result, \eqref{26-1-14:stationary} is equivalent to
\begin{equation}\label{26-1-14:g}
\bu_{\scT}(t)=g(\bX_{\scT}(t),B^{\top}\bps(t)),\q \ae~t\in[0,T].
\end{equation}

\ms

{\it Step 2.} We show that $|\bu_{\scT}(\cd)|$ is essentially uniformly bounded in $T$.
Set $h(x,y)\deq f_{x}(x,g(x,y))$ and note that 
\begin{align*}
h_{y}(x,y) = f_{xu}(x,g(x,y))g_{y}(x,y)=-f_{xu}(x,g(x,y))[f_{uu}(x,g(x,y))]^{-1}.
\end{align*}
Then by \ref{A3}, \autoref{crllry:MS-Tp} and setting $C_0 \deq \max_{|x|\les \sqrt{M}}|h(x,0)|+\g(\sqrt{M})$, we have 
\begin{equation}\label{Sec5:hxy}
\begin{aligned}
|f_x(\bX_{\scT}(t),\bu_{\scT}(t))|&=
|h(\bX_{\scT}(t), B^{\top}\bps(t))|\\
&\les |h(\bX_{\scT}(t),0)|+\int_{0}^{1}|h_y(\bX_{\scT}(t),\th B^{\top}\bps(t))|d\th \cd |B^{\top}\bps(t)| \\
& \les C_0\[1 + |B^{\top}\bps(t)|\], \q \ae~t\in[0,T].
\end{aligned}
\end{equation}
Recall the notation in \rf{Def-P12}, \rf{Decomposition-AB} and set 
$$\f_{\scT}(t) \deq P_1^{\top}\bps(t),\q t\in[0,T].$$ 
Together with \rf{Sec5:adj-psi}, we have for almost every $t\in[0,T]$ that 
\begin{equation}\label{26-1-13:eq-f}
\lt\{\begin{aligned}
\dot\f_{\scT}(t) &= -A_{11}^{\top}\f_{\scT}(t) - P_1^{\top}f_x(\bX_{\scT}(t),\bu_{\scT}(t)), \\
\f_{\scT}(T) & =0, \q B^{\top}\bps(t)=B_1^{\top}\f_{\scT}(t).  
\end{aligned}\rt.
\end{equation}

We now show the uniform boundedness of $\f_{\scT}(\cd)$.
First, for any $t\in[T-1,T]$, it follows by variation-of-constants formula  and \rf{Sec5:hxy} that 
\begin{align*}
|\f_{\scT}(t)|&\les \int_{t}^{T}|e^{(s-t)A_{11}^{\top}}P_1^{\top}|\cd|f_u(\bX_{\scT}(s),\bu_{\scT}(s))|ds \\
& \les K \int_{t}^{T}e^{\rho (s-t)}[1 + |\f_{\scT}(s)|]ds \les K + K \int_{t}^{T}e^{\rho (s-t)} |\f_{\scT}(s)|ds,
\end{align*}
where $\rho \deq \max\{\Re(\l)\nid \l\in \si(A_{11})\}+1$ 
and $K$ is a constant independent of $T$, and may vary form line to line.  
By using the Gronwall's inequality, we have 
$$|\f_{\scT}(t)|\les K\exp\Big\{\int_{t}^{T}e^{\rho(s-t)}ds\Big\}\les K\exp\Big\{\frac{e^{\rho}}{\rho}\Big\},\q \forall t\in[T-1,T],$$
which follows the uniform boundedness of $\f_{\scT}(\cd)$ on $[T-1,T]$.

\ms

Next, for $[0,T-1]$, we assume that $\p_{\scT}\deq \max_{t\in[0,T-1]}|\f_{\scT}(t)|$ is located at $\t$.  
Let $N_1$ be a constant to be determined and define
$$\lt\{\begin{aligned}
&N_2 \deq \max_{|x|\les \sqrt{M},|u-u^*|\les N_1}|f_{u}(x,u)|+1, \q  \Si_1 \deq \{t\in [\t,\t+1]\nid |\bu_{\scT}(t)-u^*|\ges N_1\}, \\  
&\Si_2 \deq \{t\in [\t,\t+1]\nid |B_1^{\top}\f_{\scT}(t)|\ges N_2\}. 
\end{aligned}\rt.$$
Then $\Si_2 \subseteq \Si_1$ and it follows from Markov's inequality  and \autoref{crllry:MS-Tp} that 
$$\mu(\Si_2) \les \mu(\Si_1) \les \frac{M}{N_1^2},$$
where $\mu(\cd)$ is the Lebesgue measure on $\dbR$.
Since $(A_{11}^{\top},B_1^{\top})$ is observable, 
by the observability inequality in \autoref{lmm:X-bound} (ii) and \rf{Sec5:hxy}--\rf{26-1-13:eq-f}, 
we can find a constant $C_1>0$, independent of $T$, such that
\begin{align*}
|\p_{\scT}|^2 & \les  C_1\int_{\t}^{\t+1}\[|B_1^{\top}\f_{\scT}(s)|^2 + |f_{x}(\bX_{\scT}(s),\bu_{\scT}(s))|^2\]ds \\
& \les C_1\int_{\t}^{\t+1}\[2C_0^2 + (1+2C_0^2)|B_1^{\top}\f_{\scT}(s)|^2\]ds \\
& =2C_1C_0^2 + C_1(1+2C_0^2)\Big\{\int_{[\t,\t+1]\setminus \Si_2}|B_1^{\top}\f_{\scT}(s)|^2ds + \int_{\Si_2}|B_1^{\top}\f_{\scT}(s)|^2ds \Big\}\\
&\les 2C_1C_0^2 + C_1(1+2C_0^2)N_2^2 + C_1(1+2C_0^2)\frac{M|B_1^{\top}|^2\p_{\scT}^2}{N_1^2}. 
\end{align*}
Now, choosing $N_1^2 = \frac{1}{2}C_1(1+2C_0^2)M|B_1^{\top}|^2$ implies
$$|\p_{\scT}|^2 \les 4C_1C_0^2 + 2C_1(1+2C_0^2)N_2^2,$$
which implies the uniform boundedness of $\f_{\scT}(\cd)$ on $[0,T-1]$.
Finally, by \rf{26-1-14:g} and \rf{26-1-13:eq-f}, there exists a constant $N>0$, independent of $T$, such that 
\begin{equation*}
|\bu_{\scT}(t)|\les N,\q\ae~t\in[0,T], \q\forall T>0.
\end{equation*}
%


{\it Step 3.} We now establish the main result \eqref{26-1-15:PETP-u}.
For $t\in[0,T]$, recall $\f_{\scT}(\cd)$ in Step 2 and introduce the notation 
\begin{align*}
&\tX(t)\deq\bX_{\scT}(t)-x^*, \q \wt\f_{\scT}(t)\deq\f_{\scT}(t)-P_1^{\top}\l_1^*,\q \tu(t) \deq \bu_{\scT}(t)-u^*,\\
&\xi_1(t)\deq f_{x}(\bX_{\scT}(t),\bu_{\scT}(t))-f_{x}(x^*,u^*), \q 
\xi_2(t)\deq f_{u}(\bX_{\scT}(t),\bu_{\scT}(t))-f_{u}(x^*,u^*).
\end{align*}
Combining \rf{Prob-O-xu}, \rf{26-1-14:xu_star}, \rf{26-1-14:stationary} and \rf{26-1-13:eq-f}, we have  
\begin{equation}\label{26-1-14:wtf}
\lt\{\begin{aligned}
&\dot{\wt\f}_{\scT}(t)= -A_{11}^{\top}\tf(t) - P_1^{\top}\xi_1(t), \q \ae~ t\in[0,T],\\
&B_1^{\top}\tf(t) + \xi_2(t) = 0,\q \ae~ t\in[0,T], 
\end{aligned}\rt. 
\end{equation}
where
$$|P_1^{\top}\xi_1(t)|\les |\xi_1(t)|,\q t\in[0,T].$$
Note that from \autoref{crllry:MS-Tp} and Step 2, $(\bX_{\scT}(\cd),\bu_{\scT}(\cd))$ is essentially uniformly bounded. 
By \ref{A2}, $\nabla f$ is continuously differentiable, so it is also locally Lipschitz.
Consequently, there exists constant $K>0$, independent of $T$, such that
\begin{equation}\label{26-1-14:xi12}
|\xi_1(t)|+|\xi_2(t)| \les K\[|\tX(t)|+|\tu(t)|\],\q \ae~t\in[0,T],\q \forall T>0.
\end{equation}
In what follows, we let $K>0$ denote a generic constant,
which is independent of $T$ and may vary from line to line. 
Recall the constants $\e, \a_0, T_0, S, L$ and the sequences $\{\a_i\}_{i=1}^{j}$, $\{\b_i\}_{i=1}^{j}$ from claim \rf{TpX-claim1}--\rf{Ineq-abl}.
Without loss of generality, we assume $T>2(\a_0+T_0+S)$, so that $j\ges 2$.
Then 
\begin{equation}\label{26-1-17:MS-Xu-i}
\int_{\a_i}^{\b_i}\[|\tX(s)|^2+|\tu(s)|^2\]ds \les L\e^{2i},\q i=1,2,\cdots,j.
\end{equation}
Consequently, for each $i=1,2,\cdots,j-1$, applying \autoref{lmm:X-bound} (ii) to \rf{26-1-14:wtf} and invoking \rf{26-1-14:xi12}--\rf{26-1-17:MS-Xu-i}, we obtain 
\begin{equation}\label{26-1-15:tfab}
\begin{aligned}
|\tf(\a_i)|^2+|\tf(\b_i)|^2 &\les K \int_{[\a_i,\a_i+\a_0]\cup[\b_i-\a_0,\b_i]}\[|B_1^{\top}\tf(s)|^2 + |P_1^{\top}\xi_1(s)|^2\]ds \\
&\les K\int_{\a_i}^{\b_i}\[|\tX(s)|^2+|\tu(s)|^2\]ds \les KL\e^{2i}.
\end{aligned}
\end{equation}

Next, since $(A_{11},B_1)$ is controllable, we can choose $F\in\dbR^{m\times k}$ such that $\hA_{11} \deq A_{11}+B_1F$ is stable.
Consequently, there exist a positive definite matrix $\vP\in\dbS^{n}_{+}$, such that 
$$\vP \hA_{11}^{\top} + \hA_{11}\vP + I_{k} =0.$$
Using the second equation in \rf{26-1-14:wtf}, the first equation can be written as
$$\dot{\wt\f}_{\scT}(t)=-\hA_{11}^{\top}\tf(t) - P_1^{\top}\xi_1(t) - F^{\top}\xi_2(t),\q \ae~t\in[0,T].$$
Differentiating $\lan \vP\tf(t),\tf(t)\ran$ yields
\begin{align*}
\frac{d}{dt}\lan \vP\tf(t),\tf(t)\ran & = -\lan (\vP\hA_{11}^{\top} + \hA_{11}\vP)\tf(t),\tf(t)\ran 
- 2\lan \vP\tf(t),P_1^{\top}\xi_1(t)+F^{\top}\xi_2(t) \ran \\
& \ges \frac{1}{2}|\tf(t)|^2 - K\[|\tX(t)|^2+|\tu(t)|^2\],\q \ae~t\in[0,T].
\end{align*}
For each $i=1,2,\cdots,j-1$, by integrating both sides of above inequality over $[\a_i,\b_i]$ and noting \rf{26-1-17:MS-Xu-i}--\rf{26-1-15:tfab}, we have
\begin{align*}
\int_{\a_i}^{\b_i}|\tf(s)|^2ds &\les 2K\int_{\a_i}^{\b_i}\[|\tX(s)|^2+|\tu(s)|^2\]ds \\
&\hp{=\ }+ 2\lan \vP\tf(\b_i),\tf(\b_i)\ran -2 \lan \vP\tf(\a_i),\tf(\a_i)\ran \les K\e^{2i}.
\end{align*}
Therefore, by \autoref{lmm:X-bound} (i) and combining \rf{26-1-14:xi12}--\rf{26-1-15:tfab}, we obtain 
\begin{align*}
\max_{t\in[\a_i,\b_i]}|\tf(t)|^2 & \les (2|\hA_{11}^{\top}|+1)\int_{\a_i}^{\b_i}\[|\tf(s)|^2 + |P_1^{\top}\xi_1(s)|^2\]ds + |\tf(\a_i)|^2\les K \e^{2i}.
\end{align*}
Since $\b_j-\a_{j}<2S$, we have
$$|\tf(t)|\les Ke^{i\ln\e},\q t\in[\a_i,\b_i], \q i=1,2,\cdots,j.$$
Applying the same procedure as in handling \rf{Sec4:Ineq_ab} and setting $\l \deq \frac{-\ln\e}{S}$, it follows that
$$|\tf(t)|\les K\[e^{-\l t} + e^{-\l (T-t)}\],\q t\in[0,T].$$
Finally, by \ref{A2}--\ref{A3}, \rf{26-1-14:xu_star} and \rf{Sec5:def-g}--\eqref{26-1-14:g}, Taylor's formula with integral remainder yields
\begin{align*}
|\bu_{\scT}(t)-u^*|& = |g(\bX_{\scT}(t),B_1^{\top}\f_{\scT}(t))-g(x^*,B_1^{\top}P_1^{\top}\l_1^*)| \\
& \les \g(\max\{\sqrt{M},|x^*|\})|\tX(t)| + \frac{|B_1^{\top}|}{\d}|\tf(t)|,\q \ae~t\in[0,T],
\end{align*}
which follows \rf{26-1-15:PETP-u} immediately.
\end{proof}

\begin{remark}
The proofs of \autoref{thm:PETP-X} and \autoref{thm:PETP-u} are inspired by the approach
developed in Lou--Wang~\cite{Lou-Wang2019}. In particular, two ingredients adapted from
\cite{Lou-Wang2019} play a crucial role in our analysis: (i) an induction argument based on a
carefully constructed family of subintervals, which leads to the claim that there exist
constants $S,L>0$, independent of $T$, and two finite sequences $\{\a_i\}_{i=1}^{j}$ and
$\{\b_i\}_{i=1}^{j}$ such that \rf{TpX-claim1}--\rf{Ineq-abl} hold; and (ii) uniform
a priori bounds for the optimal control, which allow us to convert integral estimates into
pointwise turnpike estimates. We would like to emphasize that the ideas introduced in
\cite{Lou-Wang2019} provide an elegant and powerful framework for studying turnpike
phenomena, and we gratefully acknowledge their influence on the present work.  
\end{remark}

\end{document}